\newtheorem{theorem}{Theorem}[section]
\newtheorem{lemma}[theorem]{Lemma}
\newtheorem{proposition}[theorem]{Proposition}
\newtheorem{question}[theorem]{Question}
\newtheorem{corollary}[theorem]{Corollary} 
\theoremstyle{definition}
\newtheorem{definition}[theorem]{Definition}
\theoremstyle{remark}
\newtheorem{remark}[theorem]{Remark}
\newtheorem{example}[theorem]{Example}
\newtheorem{case}{Case}
\numberwithin{subcase}{case}
\def\@seccntformat#1{%
  \protect\textup{\protect\@secnumfont
    \ifnum\pdfstrcmp{subsection}{#1}=0 \bfseries\fi
    \csname the#1\endcsname
    \protect\@secnumpunct
  }%
}  
\def\subsection{\@startsection{subsection}{3}%
  \z@{.7\linespacing\@plus.7\linespacing}{0.5\linespacing}%
  {\normalfont\bfseries}}
\DeclareMathOperator{\sym}{Sym}
\DeclareMathOperator{\mi}{R}
\DeclareMathOperator{\SP}{SP}
\subjclass[2020]{68R15, 05A05}
\keywords{Subsequences of strings, $k$-spectrum, circular splicing systems, rewriting rules, necklace}
\begin{document}
\title{Counting Subwords in Circular Words and Their Parikh Matrices}

\author{Ghajendran Poovanandran}
\address{School of Mathematics, Actuarial and Quantitative Studies\\
	Asia Pacific University of Technology \& Innovation\\
	Technology Park Malaysia, Bukit Jalil\\
	57000 Kuala Lumpur, Malaysia}
\email{ghajendran@staffemail.apu.edu.my}

\author{Jamie Simpson}
\address{Department of Mathematics and Statistics\\
	Curtin University of Technology\\
	GPO Box U1987\\
	Perth, Western Australia 6845\\
	Australia}
\email{simpson@maths.curtin.edu.au}

\author{Wen Chean Teh$^*$}\thanks{$^*$Corresponding author} 
\address{School of Mathematical Sciences\\
	Universiti Sains Malaysia\\
	11800 USM\\  \linebreak Malaysia}
\email{dasmenteh@usm.my}

\begin{abstract}
The word inference problem is to determine languages such that the information on the number of occurrences of those subwords in the language can uniquely identify a word. A considerable amount of work has been done on this problem, but the same cannot be said for circular words despite growing interests on the latter due to their applications---for example, in splicing systems. Meanwhile, Parikh matrices are useful tools and well established in the study of subword occurrences. In this work, we propose two ways of counting subword occurrences in circular words. We then extend the idea of Parikh matrices to the context of circular words and investigate this extension. Motivated by the word inference problem, we study ambiguity in the identification of a circular word by its Parikh matrix. Accordingly, two rewriting rules are developed to generate ternary circular words which share the same Parikh matrix.
\end{abstract}

\maketitle

\section{Introduction}
The word inference problem is an actively researched topic in combinatorics on words \cite{RM17,jM00}. The problem is to determine an optimal set of subwords that, together with their multiplicity, can uniquely identify a word. This problem was first studied in \cite{MMSSS91} where the maximum length of a word which could be determined by its $k$-spectrum (i.e.~multiset of all non-contiguous subwords of length at most $k$) was investigated. Some other works on $k$-spectra include \cite{DS03,DFM19,SA19}.

However, considering all words up to a certain length is not desirable, especially when it is possible for a few subwords of different lengths to determine a word uniquely. Among innovations made to investigate this problem is the Parikh matrix mapping which was introduced in \cite{MSSY01}. The entries of a Parikh matrix are counts of occurrences of a certain set of subwords and a classical problem in this area--the injectivity problem--is to determine to what extent a word is determined by its Parikh matrix.
Due to their intrinsic usefulness in studying subword occurrences, Parikh matrices are well studied in the literature (for example, see \cite{SS06,aS06,aS10,GT17b,SJ20, poovanandran2020ambiguity, vS09}). There have recently been graph theoretic studies related to Parikh matrices \cite{teh2020parikh, poovanandran2018elementary}, suggesting a potential direction in this area that creates link between combinatorics on words and graph theory. 

Circular words, also known as necklaces or cyclic words in the literature, are different from traditional linear words---the former have neither a beginning nor an end. Circular sequences are not purely theoretical as they exist naturally in the DNA strands of certain viruses and bacteria \cite{HC71}. However, circular words have not been investigated as widely as linear words.  Some current active research directions pertaining to circular words are pattern avoidance\cite{fS05,CF02,sA10} and splicing systems\cite{SSD92,BDMZ05, BDZ10}. Until now, to our best knowledge, the work closest to the study of subword occurrences in circular words is \cite{sJ14}.  

In this work, we propose two different ways of counting the number of occurrences of subwords in circular words, which can be described as the direct and the average approach. The direct approach aligns with the notion of subword histories\cite{MSY04} while the average approach is compatible with an  extension  of Parikh matrices to circular words. The latter motivates the rest of this paper where we study our newly introduced Parikh matrices of circular words. In most cases, we restrict our attention to the properties of Parikh matrices of binary and ternary circular words. By our definition, as in the case of linear words, two or more circular words may share the same Parikh matrix. In the spirit of characterizing such words, two rewriting rules analogous to the ones introduced in \cite{AAP08} are presented.

The remainder of this paper is structured as follows. Section 2 provides the basic terminology and preliminaries. In Section 3, we propose, illustrate and study two different approaches to counting subword occurrences in circular words. Section 4 provides some historical background before introducing Parikh matrices of circular words, which are compatible with the average approach of counting subword occurrences. Our main theorem in this section shows that for the binary alphabet, the Parikh matrix of a circular word depends only on the Parikh vector of the word. Section 4 studies the ambiguity of Parikh matrices for circular words. Accordingly, for the ternary alphabet, we present two ways of rewriting a word without altering its Parikh matrix. Our conclusion follows after that.

\section{Preliminaries}
The cardinality of a set $A$ is denoted by $|A|$. For a matrix $X$, we denote its $(i,j)$-entry by $X_{i,j}$.

Suppose $\Sigma$ is a finite non-empty alphabet. The set of all words over $\Sigma$ is denoted by $\displaystyle{\Sigma^\ast}$. The unique empty word is denoted by $\lambda$. Given a word $w=a_1a_2\cdots a_n$ (where $a_i\in\Sigma$ for all $1\le i\le n$), we denote by $\mi(w)$ the \textit{reverse} of $w$, that is $\mi(w)=a_na_{n-1}\cdots a_1$. For every $w\in\displaystyle{\Sigma^\ast}$, $|w|$ denotes the length of $w$ and $w[i]$ denotes the letter in the $i^{th}$ position of $w$. A word $w\in\displaystyle{\Sigma^\ast}$ is \textit{primitive} if $w$ cannot be written as $v^k$ for any $v\in \displaystyle{\Sigma^\ast}$ and integer $k\ge 2$. Given two words $v,w\in \displaystyle{\Sigma^\ast}$, the concatenation of $v$ and $w$ is denoted by  $vw$. 

An \emph{ordered alphabet} is an alphabet $\Sigma=\{a_1, a_2, \dotsc,a_s\}$ with a total ordering on it. If $a_1<a_2<\dotsb < a_s$, then we may write $\Sigma= \{a_1<a_2<\dotsb<a_s\}$. 
The projective morphism over an alphabet is defined as follows:
\begin{definition}\label{DefProjection}
Suppose $\Sigma$ is an alphabet and $\Gamma\subseteq\Sigma$. The \textit{projective morphism} $\pi_{\Gamma}: \displaystyle{\Sigma^\ast}\rightarrow \displaystyle{\Gamma^\ast}$ is defined by 
\begin{equation*}
\pi_{\Gamma}(a)=\begin{cases}
a, & \text{if } a\in\Gamma\\
\lambda, & \text{otherwise.}
\end{cases}
\end{equation*}
\end{definition}

A word $v$ is a \emph{scattered subword} (in this paper, we would simply refer to it as \emph{subword}) of $w\in \displaystyle{\Sigma^\ast}$ if there exist $x_1,x_2,\dotsc, x_k$, $y_0, y_1, \dotsc,y_k\in \displaystyle{\Sigma^\ast}$ such that $v=x_1x_2\dotsm x_k$ and  $w=y_0x_1y_1x_2y_2\dotsm x_ky_k$. Note that traditionally, the term subword (without ``scattered") coincides with the term factor, which denotes a contiguous part of a word. The number of occurrences of a word $v$ as a subword of $w$ is denoted by $\vert w\vert_v$. Two occurrences of $v$ are considered different if and only if they differ by at least one position of some letter. For example, $\vert bcbcc\vert_{bc}=5$ and $\vert aabcbc\vert_{abc}=6$.
By convention, $\vert w\vert_{\lambda}=1$ for all $w\in \displaystyle{\Sigma^\ast}$. Let $\sym(k)$ denote the set of permutations on the set of integers $\{1,2,\ldots ,k\}$. The following is a basic combinatorial property of words. A brief proof is provided for completeness.
\begin{proposition}\label{CombPropLinWord}
	Suppose $\Sigma=\{a_1,a_2,\ldots ,a_s\}$ and $w\in \displaystyle{\Sigma^\ast}$. 
	Then 
	$$\sum\limits_{\sigma\in\sym(s)}|w|_{a_{\sigma(1)}a_{\sigma(2)}\cdots a_{\sigma(s)}}= \prod\limits_{i=1}^{s}|w|_{a_i}.$$ 
\end{proposition}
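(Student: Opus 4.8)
The plan is to prove the identity by a direct bijective count. First I would interpret the right-hand side $\prod_{i=1}^{s}|w|_{a_i}$ as the number of $s$-tuples $(p_1,p_2,\dotsc,p_s)$ of positions in $w$ with $w[p_i]=a_i$ for every $i$: choosing an occurrence of each letter independently is precisely choosing one such position per letter, so the product counts exactly these tuples. Since the letters $a_1,\dotsc,a_s$ are pairwise distinct, every such tuple automatically consists of pairwise distinct positions.

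Next I would classify these tuples by the relative order of their entries. Given a tuple $(p_1,\dotsc,p_s)$ of distinct positions, there is a unique $\sigma\in\sym(s)$ with $p_{\sigma(1)}<p_{\sigma(2)}<\dotsb<p_{\sigma(s)}$. Reading $w$ at these positions in increasing order then spells $w[p_{\sigma(1)}]w[p_{\sigma(2)}]\dotsm w[p_{\sigma(s)}]=a_{\sigma(1)}a_{\sigma(2)}\dotsm a_{\sigma(s)}$, so the tuple singles out a specific occurrence of the word $a_{\sigma(1)}a_{\sigma(2)}\dotsm a_{\sigma(s)}$ as a subword of $w$.

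I would then check that, for each fixed $\sigma$, this assignment restricts to a bijection between the tuples yielding $\sigma$ and the occurrences of $a_{\sigma(1)}\dotsm a_{\sigma(s)}$ in $w$: an occurrence is by definition an increasing sequence $q_1<q_2<\dotsb<q_s$ with $w[q_j]=a_{\sigma(j)}$, and setting $p_{\sigma(j)}:=q_j$ recovers the unique tuple of that order type mapping to this occurrence. Summing over all $\sigma\in\sym(s)$ then yields $\prod_{i=1}^{s}|w|_{a_i}=\sum_{\sigma\in\sym(s)}|w|_{a_{\sigma(1)}a_{\sigma(2)}\dotsm a_{\sigma(s)}}$, as claimed.

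There is no substantial obstacle here; the only point needing a little care is the index bookkeeping in the bijection — keeping straight that $\sigma$ records the order type of the tuple, that the $p_i$ are indexed by letters while the $q_j$ are indexed by rank, and that distinctness of the letters is exactly what makes $\sigma$ a genuine permutation rather than merely a weak order.
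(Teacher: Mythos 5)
Your proof is correct and is essentially the paper's own argument: both sides are shown to count the sets of positions containing exactly one occurrence of each letter, classified by the permutation recording the order in which the letters appear. The only cosmetic difference is that you index the tuple by letters and sort it, whereas the paper works directly with the increasing tuples; this also lets you skip the paper's (unnecessary) separate treatment of the case where some $|w|_{a_i}=0$.
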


\begin{proof}
Fix an arbitrary word $w\in \displaystyle{\Sigma^\ast}$. The result holds trivially if $\vert w\vert_{a_i}=0$ for some $1\leq i\leq s$.	Assume otherwise.
Consider the collection of $s$-tuples $(j_1, j_2, \dotsc, j_s)$
such that $1\leq j_1< j_2 < \dotsb < j_s\leq \vert w\vert $ and $\vert w[j_1]w[j_2]\dotsm w[j_s] \vert_{a_i}=1$ for all $1\leq i\leq s$.
By the multiplication principle of counting, the number of such tuples is $\prod\limits_{i=1}^{s}|w|_{a_i}$. On the other hand, each such tuple specifies an occurrence of $a_{\sigma(1)}a_{\sigma(2)}\cdots a_{\sigma(s)}$ in $w$ for a unique $\sigma\in \sym(s)$, where
$a_{\sigma(k)}= w[j_k]$ for all $1\leq k\leq s$, and vice versa. Therefore, the number of such tuples is also given by $\sum\limits_{\sigma\in\sym(s)}|w|_{a_{\sigma(1)}a_{\sigma(2)}\cdots a_{\sigma(s)}}$.
\end{proof}

For any integer $k\geq 2$, let $\mathcal{M}_k$ denote the multiplicative monoid of $k \times k$ upper triangular matrices with nonnegative integer entries and units on the main diagonal.

\begin{definition} \cite{MSSY01}\label{240821}
Suppose $\Sigma=\{a_1<a_2<\cdots <a_s\}$ is an ordered alphabet. The \textit{Parikh matrix mapping} with respect to $\Sigma$, denoted by $\Psi_\Sigma$, is the morphism
\begin{equation*}
\Psi_\Sigma:\Sigma^{\ast}\rightarrow\mathcal{M}_{s+1},
\end{equation*}
defined as follows: $\Psi_{\Sigma}(\lambda)=I_{s+1}$; for each $1\leq q\leq s$, 	
$\Psi_{\Sigma}(a_q)=M$, where
\begin{itemize}
	\item $M_{i,i}=1$ for all $1\leq i\leq s+1$;
	\item $M_{q,q+1}=1$;
	\item all other entries of $M$ are zero; and	
\end{itemize}
for every $w=a_{\ell_1}a_{\ell_2}\dotsm a_{\ell_{\vert w\vert}}\in  \displaystyle{\Sigma^\ast}$, we have $\Psi_{\Sigma}(w)= \Psi_{\Sigma}( a_{\ell_1}) \Psi_{\Sigma}( a_{\ell_2}) 
\dotsm \Psi_{\Sigma}( a_{\ell_{\vert w\vert}})$.	
Matrices of the form $\Psi_\Sigma(w)$ for $w\in  \displaystyle{\Sigma^\ast}$ are called  \textit{Parikh matrices}.
\end{definition}

\begin{theorem}\cite{MSSY01}\label{1206a}
Suppose $\Sigma=\{a_1<a_2< \dotsb<a_s\}$ is an ordered alphabet and $w\in \displaystyle{\Sigma^\ast}$. The matrix $\Psi_{\Sigma}(w)=M$, has the following properties:
\begin{itemize}
\item $M_{i,i}=1$ for each $1\leq i \leq s+1$;
\item $M_{i,j}=0$ for each $1\leq j<i\leq s+1$;
\item $M_{i,j+1}=\vert w \vert_{a_ia_{i+1}\dotsm a_j}$ for each $1\leq i\leq j \leq s$.
\end{itemize}
\end{theorem}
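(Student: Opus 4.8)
The plan is to establish all three properties simultaneously by induction on $|w|$, using that $\Psi_\Sigma$ is a morphism so that $\Psi_\Sigma(w'a_q) = \Psi_\Sigma(w')\,\Psi_\Sigma(a_q)$. For the base case $w = \lambda$ we have $\Psi_\Sigma(\lambda) = I_{s+1}$, whose diagonal entries are $1$ and whose strictly lower-triangular entries are $0$; and for $1 \le i \le j \le s$ the entry $(i,j+1)$ lies strictly above the diagonal, hence is $0$, which matches $|\lambda|_{a_i a_{i+1} \cdots a_j} = 0$ since the subword $a_i \cdots a_j$ is nonempty. It is also convenient to record at the outset the appending recurrence for subword counts: for a word $u$ and a letter $b$, an occurrence of a subword $v$ in $ub$ either lies entirely within $u$ or uses the final position of $ub$; consequently $|ub|_v = |u|_v$ when $v$ does not end in $b$, and $|ub|_{v'b} = |u|_{v'b} + |u|_{v'}$ for any word $v'$ (with the convention $|u|_\lambda = 1$).

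For the inductive step, write $w = w' a_q$ with $1 \le q \le s$, assume the three properties hold for $w'$, and set $M' = \Psi_\Sigma(w')$ and $M = \Psi_\Sigma(w) = M'\,\Psi_\Sigma(a_q)$. Since $\Psi_\Sigma(a_q) = I_{s+1} + E$, where $E$ is the matrix whose only nonzero entry is $E_{q,q+1} = 1$, right multiplication by $\Psi_\Sigma(a_q)$ leaves every column of $M'$ unchanged except column $q+1$, to which it adds column $q$ of $M'$. Thus $M_{i,j} = M'_{i,j}$ for all $j \ne q+1$, while $M_{i,q+1} = M'_{i,q+1} + M'_{i,q}$. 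The first two properties of $M$ follow at once: the only entries that can differ from $M'$ lie in column $q+1$, where $M_{q+1,q+1} = M'_{q+1,q+1} + M'_{q+1,q} = 1 + 0 = 1$, and $M_{i,q+1} = M'_{i,q+1} + M'_{i,q} = 0$ whenever $q+1 < i$ (both summands being strictly below the diagonal of $M'$).

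It remains to check $M_{i,j+1} = |w|_{a_i a_{i+1} \cdots a_j}$ for $1 \le i \le j \le s$. If $j \ne q$, then by the inductive hypothesis $M_{i,j+1} = M'_{i,j+1} = |w'|_{a_i \cdots a_j}$, and since $a_j \ne a_q$ the appending recurrence gives $|w'a_q|_{a_i \cdots a_j} = |w'|_{a_i \cdots a_j}$, so $M_{i,j+1} = |w|_{a_i \cdots a_j}$. If $j = q$, then $M_{i,q+1} = M'_{i,q+1} + M'_{i,q}$; the inductive hypothesis gives $M'_{i,q+1} = |w'|_{a_i \cdots a_q}$, and $M'_{i,q}$ equals $|w'|_{a_i \cdots a_{q-1}}$ when $i \le q-1$ and equals $M'_{q,q} = 1 = |w'|_\lambda$ when $i = q$, so in either case $M_{i,q+1} = |w'|_{a_i \cdots a_q} + |w'|_{a_i \cdots a_{q-1}}$, which by the appending recurrence is exactly $|w'a_q|_{a_i \cdots a_q} = |w|_{a_i \cdots a_q}$.

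I expect the main difficulty to be bookkeeping rather than conceptual: one must apply the inductive hypothesis only to index pairs in the valid range $1 \le i \le j \le s$, treat the boundary case $i = j = q$ where the relevant prefix subword degenerates to $\lambda$, and correctly identify which single column of the matrix product is modified by right multiplication by $\Psi_\Sigma(a_q)$. Isolating the appending recurrence for $|{\cdot}|_v$ before starting the induction is what reduces everything else to routine verification.
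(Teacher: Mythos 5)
Your proof is correct. Note, however, that the paper itself gives no proof of this statement: it is quoted from Mateescu, Salomaa, Salomaa and Yu \cite{MSSY01} as a known property of the Parikh matrix mapping, so there is no internal argument to compare against. Your induction on $|w|$ --- isolating the appending recurrence $|ub|_{v'b}=|u|_{v'b}+|u|_{v'}$ and $|ub|_v=|u|_v$ when $v$ does not end in $b$, observing that right multiplication by $\Psi_\Sigma(a_q)=I_{s+1}+E_{q,q+1}$ adds column $q$ to column $q+1$, and handling the degenerate case $i=j=q$ via $M'_{q,q}=1=|w'|_\lambda$ --- is the standard argument for this result and is complete as written.
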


\begin{remark}\label{010721}
Suppose $\Sigma=\{a_1<a_2< \dotsb<a_s\}$. The \emph{Parikh vector} $\Psi(w)=(|w|_{a_1},|w|_{a_2},\ldots,|w|_{a_s})$ of a word $w\in\Sigma^*$ is contained in the second diagonal of the Parikh matrix $\Psi_\Sigma(w)$.
\end{remark}

\begin{example}
	Suppose $\Sigma=\{a<b<c\}$ and $w=bacbc$, then:
	\begin{align*}
		\Psi_{\Sigma}(w)&=\Psi_{\Sigma}(b)\Psi_{\Sigma}(a)\Psi_{\Sigma}(c)\Psi_{\Sigma}(b)\Psi_{\Sigma}(c)\\
		&=\begin{pmatrix}
			1 & 0 & 0 & 0 \\
			0 & 1 & 1 & 0\\
			0 & 0 & 1 & 0\\
			0 & 0 & 0 & 1
		\end{pmatrix}
		\begin{pmatrix}
			1 & 1 & 0 & 0 \\
			0 & 1 & 0 & 0\\
			0 & 0 & 1 & 0\\
			0 & 0 & 0 & 1
		\end{pmatrix}
		\dotsm
		\begin{pmatrix}
			1 & 0 & 0 & 0 \\
			0 & 1 & 0 & 0\\
			0 & 0 & 1 & 1\\
			0 & 0 & 0 & 1
		\end{pmatrix}\\
		&=\begin{pmatrix}
			1 & 1 & 1 & 1 \\
			0 & 1 & 2 & 3\\
			0 & 0 & 1 & 2\\
			0 & 0 & 0 & 1
		\end{pmatrix}
		=\begin{pmatrix}
			1 & \vert w\vert_a & \vert w\vert_{ab} & \vert w\vert_{abc} \\
			0 &1 & \vert w\vert_b & \vert w\vert_{bc}\\
			0 & 0 & 1 & \vert w\vert_c\\
			0 & 0 & 0 & 1
		\end{pmatrix}.
	\end{align*}
\end{example}

\section{Counting Subword Occurrences in Circular Words}
In contrast to classical linear words, a \textit{circular word} (sometimes referred to as a \textit{necklace} in the literature) has neither a beginning nor an end. To describe it formally, we first need the following relation.
\begin{definition}
	Suppose $\Sigma$ is an alphabet and  $w=a_1a_2\cdots a_n\in \displaystyle{\Sigma^\ast}$. 
	For any integer $0\le i\le n-1$, the \textit{i-th cyclic shift} of $w$ is the word $w'=a_{i+1}a_{i+2}\cdots a_na_1\cdots a_i$. Two words $w,w'\in \displaystyle{\Sigma^\ast}$ are \textit{conjugates}, denoted by $w\sim_cw'$, if $w'$ is a cyclic shift of $w$.
\end{definition}
Note that although there are $n$ possible cyclic shifts of a word of length $n$, some of them may be equal---thus the number of conjugates of the word may be less than $n$. The conjugacy relation is clearly an equivalence relation over $\displaystyle{\Sigma^\ast}$. Thus, a circular word can be defined as follows.
\begin{definition}
	Suppose $\Sigma$ is an alphabet. The \textit{circular word over $\Sigma$} represented by a word $w\in \displaystyle{\Sigma^\ast}$, denoted by $[w]$, is the equivalence class of $w$ under the conjugacy relation. We denote the set of all circular words over $\Sigma$ by $\displaystyle{\Sigma_c^\ast}$.
\end{definition}
\begin{remark}\leavevmode 
	\begin{enumerate}
		\item The definition of a circular word $[w]$ is independent of the representative of $[w]$---the word $w$ is simply an arbitrary choice from the conjugacy class.
		\item If $w$ is a primitive word, then the cardinality $|[w]|$ of the conjugacy class $[w]$ equals $|w|$; otherwise if $w=v^k$ for some primitive word $v$ and integer $k\ge 2$, then $|[w]|=|v|$.
	\end{enumerate}
\end{remark}
Since a circular word $[w]$ is a class containing all cyclic shifts of $w$, we can view a circular word literally as a word drawn on a circle.

\begin{example}\label{ExCyclSeq}
	Suppose $\Sigma=\{a,b,c\}$ and consider the word $w=cabacb$. In order, $w$ and its cyclic shifts constitute the 
	circular word $[w]$:
	\begin{equation*}
[w]=\{cabacb,abacbc,bacbca,acbcab,cbcaba,bcabac\}.
	\end{equation*}
	We can thus write (in clockwise orientation for convention) the circular word $[w]$ as follows:
	\begin{center}
		\begin{tikzpicture}[>=stealth',semithick,auto,]
			[scale=.8,auto=left,every node/.style={}]
			\node at (0,0) {c};
			\node at (0.5,-0.3) {a};
			\node at (0.5,-0.8) {b};
			\node at (0,-1.1) {a};
			\node at (-0.5,-0.8) {c};
			\node at (-0.5,-0.3) {b};
		\end{tikzpicture}
	\end{center} 
\end{example}

Due to the cyclic structure of a circular word, the classical way of counting subword occurrences (as in the case of linear words) is not applicable. Thus, we propose and study two possible ways to count, for a circular word, the number of occurrences of a word as a subword in it.

\subsection{A Direct Approach}
We first provide some examples to illustrate our first approach. These will serve as a motivation for the definition that follows.

\begin{example}\label{ExTechnique1}
	Suppose $\Sigma=\{a,b,c\}$ and consider the circular word $[w]=[cabacb]$. We write $[w]$ as follows:
	\begin{center}
		\begin{tikzpicture}[>=stealth',semithick,auto,]
			[scale=.8,auto=left,every node/.style={}]
			\node at (0,0) {$c_1$};
			\node at (0.5,-0.3) {$a_1$};
			\node at (0.5,-0.8) {$b_1$};
			\node at (0,-1.1) {$a_2$};
			\node at (-0.5,-0.8) {$c_2$};
			\node at (-0.5,-0.3) {$b_2$};
		\end{tikzpicture}
	\end{center} 
	with the subscripts assigned to distinguish identical letters. We count four occurrences of $abc$ as a subword in $[w]$---particularly, $a_1b_1c_2$, $a_1b_1c_1$, $a_1b_2c_1$ and $a_2b_2c_1$. 
	Note that subwords are not allowed to overlap themselves. Hence, in this example, $a_1b_2c_2$ is not a subword. Also, notice that the count of the distinct combinations of letters also corresponds to the sum $\sum\limits_{u\in[abc]}|w|_u=|w|_{abc}+|w|_{bca}+|w|_{cab}=1+0+3=4$, where $w$ is the linear word $cabacb$.
\end{example}

\begin{example}\label{ExTechnique2}
	Consider the circular word $[w]=[aaaaaa]$. We write $[w]$ as follows:
	\begin{center}
		\begin{tikzpicture}[>=stealth',semithick,auto,]
			[scale=.8,auto=left,every node/.style={}]
			\node at (0,0) {$a_1$};
			\node at (0.5,-0.3) {$a_2$};
			\node at (0.5,-0.8) {$a_3$};
			\node at (0,-1.1) {$a_4$};
			\node at (-0.5,-0.8) {$a_5$};
			\node at (-0.5,-0.3) {$a_6$};
		\end{tikzpicture}
	\end{center} 
	with the subscripts assigned to distinguish identical letters. We count $\binom{6}{2}=15$ occurrences of $aa$ as a subword in $[w]$. (Note that $a_ia_j$ and $a_ja_i$ are considered as the same occurrence.) Also, $\sum\limits_{u\in[aa]}|w|_u=|w|_{aa}=\binom{6}{2}=15$.
\end{example}

Generally, suppose $\Sigma$ is an alphabet and $[w]$ is a circular word over $\Sigma$ with $w$ as a fixed representative. Let $v\in\displaystyle{\Sigma^\ast}$. Motivated by the previous two examples, the following is our first proposed count of occurrences of  $v$ as a subword of $[w]$:
\begin{align*}
&\left\vert \left\{ \, (i_1,i_2, \dotsc, i_{|v|})\in \mathbb{N}^{|v|} \mid 1\leq i_1<i_2<\dotsb<i_{|v|}\leq \vert w\vert  \text{ and }   w[i_1]w[i_2]\dotsm w[i_{|v|}]\in [v]\, \right\}     \right|\\
={}& \left\vert \bigcup_{u\in [v]}  \left\{\,  (i_1,i_2, \dotsc, i_{|v|})\in \mathbb{N}^{|v|}\mid 1\leq i_1<i_2<\dotsb<i_{|v|}\leq \vert w\vert  \text{ and }   w[i_1]w[i_2]\dotsm w[i_{|v|}] =u \,\right\}       \right\vert\\
={}& \sum_{u\in [v]} \left\vert  \left\{ \, (i_1,i_2, \dotsc, i_{|v|})\in \mathbb{N}^{|v|} \mid 1\leq i_1<i_2<\dotsb<i_{|v|}\leq \vert w\vert  \text{ and }   w[i_1]w[i_2]\dotsm w[i_{|v|}] =u \,\right\}       \right\vert.
\end{align*}
Therefore, it leads to our following definition.

\begin{definition}\label{DefDirect}
Suppose $\Sigma$ is an alphabet and $[w]\in  \displaystyle{\Sigma_c^\ast}$. 
The number of occurrences of a word $v\in\Sigma^*$ as a subword of $[w]$, denoted by $|[w]|_v$ is defined by $|[w]|_v=\sum\limits_{u\in[v]}|w|_u$.
\end{definition}

\begin{remark}
Based on our intended interpretation and discussion above, the value $|[w]|_v$ does not depend on the representative of the circular word  (in this case, $w$) as for any word $w'$ with $w'\sim_c w$, we have $\sum\limits_{u\in[v]}|w|_u=\sum\limits_{u\in[v]}|w'|_u$. This can also be shown directly from definition~\ref{DefDirect}. 
\end{remark}

\begin{example}
Suppose $\Sigma=\{a,b,c\}$ and consider the two conjugate words $w=cabacb$ and $w'=bacbca$. We have 
\begin{align*}
|[w]|_{abc}&=|w|_{abc}+|w|_{bca}+|w|_{cab}\\
		   &=1+0+3=4;\\
|[w']|_{abc}&=|w'|_{abc}+|w'|_{bca}+|w'|_{cab}\\
		   &=1+3+0=4.
\end{align*}
\end{example}

\begin{remark}
Suppose $\Sigma$ is an alphabet, $[w]\in \displaystyle{\Sigma_c^\ast}$. If $v=a^k$ for some $a\in\Sigma$ and integer $1\le k\le |w|_a$, then $|[w]|_v=\binom{|w|_a}{k}$.
\end{remark}

We now consider the occurrence of a particular class of linear words, introduced in \cite{GT17b}, as subwords in circular words.

\begin{definition}\label{010921d}
Suppose $\Sigma$ is an alphabet. A word $u\in\displaystyle{\Sigma^\ast}$ is a \textit{slender Parikh word} if  $|u|_a\le 1$ for all $a\in\Sigma$.
Let $\SP_\Sigma$ denote the set of all slender Parikh words over $\Sigma$.
\end{definition}
Let $\Sigma$ be an alphabet and $s= \vert \Sigma\vert$. There are  $s!$ slender Parikh words of length $s$ over $\Sigma$ and these
can be partitioned into $(s-1)!$ equivalence classes over the conjugacy relation. The representatives of those equivalence classes can be used to extend Proposition~\ref{CombPropLinWord} to the context of circular words.

\begin{proposition}\label{ProSPandCirc}
	Suppose $\Sigma=\{a_1<a_2<\cdots <a_s\}$ and $[w]\in \displaystyle{\Sigma_c^\ast}$. Let $v_i, \,1\le i\le (s-1)!$ enumerate arbitrary representatives, one from each of the $(s-1)!$ conjugacy equivalence classes constituting the words in $\SP_{\Sigma}$ of length $s$. Then
	$$\sum_{1\le i \le (s-1)!}|[w]|_{v_i}=\prod\limits_{i=1}^{s}|w|_{a_i}.$$
\end{proposition}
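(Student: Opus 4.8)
The plan is to unfold Definition~\ref{DefDirect} and regroup the resulting double sum so that it becomes exactly the left-hand side of Proposition~\ref{CombPropLinWord}. First I would record the elementary observation that a word $u\in\Sigma^\ast$ with $|u|=s$ belongs to $\SP_\Sigma$ if and only if every letter of $\Sigma$ occurs exactly once in $u$, that is, $u=a_{\sigma(1)}a_{\sigma(2)}\cdots a_{\sigma(s)}$ for a (necessarily unique) $\sigma\in\sym(s)$. Thus $\sigma\mapsto a_{\sigma(1)}\cdots a_{\sigma(s)}$ is a bijection between $\sym(s)$ and the set of slender Parikh words of length $s$. Since a cyclic shift of a word preserves the multiplicity of each letter, slenderness is invariant under conjugacy, so the conjugacy relation restricts to an equivalence relation on this set; its equivalence classes are precisely $[v_1],\dotsc,[v_{(s-1)!}]$. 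In particular these classes are pairwise disjoint and $\bigcup_{i}[v_i]=\{\,a_{\sigma(1)}a_{\sigma(2)}\cdots a_{\sigma(s)}\mid\sigma\in\sym(s)\,\}$.

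With this in hand, I would compute directly, using Definition~\ref{DefDirect} and then the disjointness just noted:
\[
\sum_{1\le i\le(s-1)!}|[w]|_{v_i}=\sum_{1\le i\le(s-1)!}\ \sum_{u\in[v_i]}|w|_u=\sum_{\sigma\in\sym(s)}|w|_{a_{\sigma(1)}a_{\sigma(2)}\cdots a_{\sigma(s)}},
\]
where the last equality holds because each word occurring on the left is of the form $a_{\sigma(1)}\cdots a_{\sigma(s)}$ for a unique $\sigma\in\sym(s)$, and, the classes $[v_i]$ being a partition, each such $\sigma$ is counted exactly once. Finally, applying Proposition~\ref{CombPropLinWord} to the right-hand side gives $\prod_{i=1}^{s}|w|_{a_i}$, which is the desired identity.

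There is essentially no hard step: the proposition is a clean consequence of Proposition~\ref{CombPropLinWord} obtained by reindexing the permutations of $\Sigma$ according to cyclic equivalence. The only point that deserves a careful line is the bookkeeping that the conjugacy classes $[v_i]$ exhaust, and partition, the $s!$ words $a_{\sigma(1)}\cdots a_{\sigma(s)}$; this uses only that cyclic shifts preserve letter multiplicities together with the count of $(s-1)!$ classes already noted before the proposition. Optionally I would remark that every length-$s$ slender Parikh word is primitive (a proper power would repeat a letter), so each class has exactly $s$ members and $s\cdot(s-1)!=s!$, but this refinement is not needed for the stated equality.
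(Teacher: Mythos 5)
Your proposal is correct and follows exactly the paper's one-line argument: unfold Definition~\ref{DefDirect} into a double sum, observe that the classes $[v_i]$ partition the $s!$ length-$s$ slender Parikh words so the double sum reindexes as a sum over $\sym(s)$, and apply Proposition~\ref{CombPropLinWord}. Your additional bookkeeping about slenderness being conjugacy-invariant and each class being primitive is sound but merely makes explicit what the paper leaves implicit.
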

\begin{proof}
	$\displaystyle\sum_{1\le i\le (s-1)!}|[w]|_{v_i}=
	\sum_{1\le i\le (s-1)!}\sum_{u\in[v_i]}|w|_{u}=
	\sum\limits_{\sigma\in\sym(s)}|w|_{a_{\sigma(1)}a_{\sigma(2)}\cdots a_{\sigma(s)}}=
	\prod\limits_{i=1}^{s}|w|_{a_i}.$
\end{proof}

Definition~\ref{DefDirect} can be restated using a known terminology. Subword histories over an alphabet $\Sigma$ were introduced by Mateescu et al.~\cite{MSY04} and they are polynomials with integer coefficients in terms of words over $\Sigma$, for example, $abc \times bc - 2ab+c$. They are meant to study identities or inequalities involving counts of subword occurrences. For that purpose, every subword history is associated to an evaluation. As an illustration, the value of the aforementioned subword history in a word  $w$ is defined to be $|w|_{abc}|w|_{bc}-2|w|_{ab}+|w|_c$.

The introductory paper \cite{MSY04}, partly motivated by Parikh matrices, studied certain associated canonical subword histories. Since the count $|[w]|_v$ of subword occurrences in the circular word $[w]$ as defined in Definition~\ref{DefDirect} is the same as the value of the subword history $\sum_{u\in [v]} u$, it suggests that subword histories of the form  $\sum_{u\in [v]} u$ are also worth studying.
 
\subsection{An Average Approach}
Our second way of counting subword occurrences in circular words is based on the view that the count is the average of the ones obtained for each linear word in the associated conjugacy equivalence class.

\begin{definition}\label{DefParikhCircular}
Suppose $\Sigma$ is an alphabet and $[w]\in \displaystyle{\Sigma_c^\ast}$. The number of occurrences of a word $v\in \displaystyle{\Sigma^\ast}$ as a subword of $[w]$ is defined by $|[w]|_v=\frac{1}{|[w]|}\sum\limits_{u\in[w]}|u|_v$.
\end{definition}

We abuse notation and let $\vert [w]\vert_v$ denote the count in both the direct approach and the average approach.
However, from now onwards, $\vert [w]\vert_v$ always refers to the count as defined in Definition~\ref{DefParikhCircular} and thus less ambiguity is present. Furthermore, the remainder of this work can be interpreted as providing an argument that the average approach is desirable over the direct one in the context being studied.

\begin{remark}\label{AltDef}
	Alternatively, the definition of $|[w]|_v$ can also be expressed in terms of the cyclic shifts of $w$. 
	For every integer $0\leq i\leq \vert w\vert-1$, let $w_i$ denote the $i$-th cyclic shift of $w$. Then for every $v\in \displaystyle{\Sigma^\ast}$, 
	$$\vert [w]\vert_v = \frac{1}{\vert w\vert}\sum_{i=0}^{|w|-1}\vert w_i\vert_v.$$	
\end{remark}

\begin{example}
Suppose $\Sigma=\{a,b,c\}$ and $[w]=[abcabc]$. By Definition~\ref{DefParikhCircular},  $$|[w]|_{ab}=\frac{1}{3}(|abcabc|_{ab}+|bcabca|_{ab}+|cabcab|_{ab})=\frac{7}{3}.$$ 
\end{example}

\begin{remark}\label{RemSingleLet}
For any $a\in\Sigma$, it holds that $|[w]|_a=|w|_a$ due to the simple fact that $|u|_a=|w|_a$ for every $u\in[w]$.
\end{remark}

In comparison to Proposition~\ref{ProSPandCirc}, the next theorem, which uses the average method of counting, extends Proposition~\ref{CombPropLinWord} to circular words more naturally.

\begin{theorem}\label{CombPropTerWord}
Suppose $\Sigma=\{a_1,a_2,\ldots,a_s\}$ and $[w]\in \displaystyle{\Sigma_c^\ast}$. 
Then 
$$\sum\limits_{\sigma\in\sym(s)}|[w]|_{a_{\sigma(1)}a_{\sigma(2)}\cdots a_{\sigma(s)}} =\prod\limits_{i=1}^{s}|[w]|_{a_i}.$$
\end{theorem}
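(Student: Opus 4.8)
The plan is to reduce the circular-word statement to the linear-word identity of Proposition~\ref{CombPropLinWord} by unfolding the definition of $|[w]|_v$ via the cyclic-shift formula in Remark~\ref{AltDef}. Write $n = |[w]|$ for the number of distinct conjugates (equivalently, use the full list of $|w|$ cyclic shifts $w_0, w_1, \dotsc, w_{|w|-1}$ as in Remark~\ref{AltDef}, which is cleaner because the normalizing factor is uniformly $|w|$). The left-hand side becomes
$$\sum_{\sigma\in\sym(s)} |[w]|_{a_{\sigma(1)}\cdots a_{\sigma(s)}} = \sum_{\sigma\in\sym(s)} \frac{1}{|w|}\sum_{i=0}^{|w|-1} |w_i|_{a_{\sigma(1)}\cdots a_{\sigma(s)}} = \frac{1}{|w|}\sum_{i=0}^{|w|-1}\left(\sum_{\sigma\in\sym(s)} |w_i|_{a_{\sigma(1)}\cdots a_{\sigma(s)}}\right),$$
where I have just swapped the two finite sums. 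Now Proposition~\ref{CombPropLinWord} applies to each linear word $w_i$ individually, giving $\sum_{\sigma\in\sym(s)} |w_i|_{a_{\sigma(1)}\cdots a_{\sigma(s)}} = \prod_{j=1}^{s} |w_i|_{a_j}$.

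The final step is to observe that the single-letter counts are conjugation-invariant: $|w_i|_{a_j} = |w|_{a_j}$ for every $i$ and every $j$ (this is exactly the fact underlying Remark~\ref{RemSingleLet}, namely $|[w]|_{a_j} = |w|_{a_j}$). Hence $\prod_{j=1}^{s} |w_i|_{a_j} = \prod_{j=1}^{s} |w|_{a_j}$ does not depend on $i$, so
$$\frac{1}{|w|}\sum_{i=0}^{|w|-1}\prod_{j=1}^{s} |w_i|_{a_j} = \frac{1}{|w|}\cdot |w|\cdot \prod_{j=1}^{s}|w|_{a_j} = \prod_{j=1}^{s}|w|_{a_j} = \prod_{j=1}^{s} |[w]|_{a_j},$$
the last equality again by Remark~\ref{RemSingleLet}. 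Chaining the displays gives the claim.

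There is essentially no obstacle here: the only thing to be careful about is consistency in the normalization (using all $|w|$ cyclic shifts versus the $|[w]|$ distinct conjugates — both work, since in the imprimitive case each distinct conjugate is repeated the same number of times, so the average is unchanged), and making sure the interchange of the two finite summations is written cleanly. I would present it as a short chain of equalities in a single \texttt{align*} block, citing Remark~\ref{AltDef} for the first step, Proposition~\ref{CombPropLinWord} for the middle, and Remark~\ref{RemSingleLet} for the constancy of the per-letter product and for rewriting the right-hand side. The contrast worth flagging in the surrounding text is that, unlike Proposition~\ref{ProSPandCirc} which needed a set of conjugacy-class representatives, here the full symmetric group appears on both sides exactly as in the linear case, which is the sense in which the average approach extends Proposition~\ref{CombPropLinWord} "more naturally."
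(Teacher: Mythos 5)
Your proposal is correct and follows essentially the same route as the paper's proof: unfold the definition of $|[w]|_v$ as an average over conjugates, interchange the two finite sums, apply Proposition~\ref{CombPropLinWord} to each conjugate, and use the conjugation-invariance of single-letter counts (Remark~\ref{RemSingleLet}) to collapse the average. The only cosmetic difference is that the paper sums over the $|[w]|$ distinct conjugates via Definition~\ref{DefParikhCircular} rather than over all $|w|$ cyclic shifts via Remark~\ref{AltDef}, which, as you note, changes nothing.
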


\begin{proof}
By Definition~\ref{DefParikhCircular} and Proposition~\ref{CombPropLinWord}, we have
\begin{equation*}
\begin{aligned}
\sum\limits_{\sigma\in\sym(s)}|[w]|_{a_{\sigma(1)}a_{\sigma(2)}\cdots a_{\sigma(s)}}&=\sum\limits_{\sigma\in\sym(s)}\frac{1}{|[w]|}\sum\limits_{u\in[w]}|u|_{a_{\sigma(1)}a_{\sigma(2)}\cdots a_{\sigma(s)}}\\
			  &=\frac{1}{|[w]|}\sum\limits_{u\in[w]}\sum\limits_{\sigma\in\sym(s)}|u|_{a_{\sigma(1)}a_{\sigma(2)}\cdots a_{\sigma(s)}}\\			  &=\frac{1}{|[w]|}\sum\limits_{u\in[w]}\prod\limits_{i=1}^{s}|u|_{a_i}\\
			  &=\frac{1}{|[w]|}\sum\limits_{u\in[w]}\prod\limits_{i=1}^{s}|w|_{a_i}\\			  
			  &=\frac{1}{|[w]|}\left( |[w]|\prod\limits_{i=1}^{s}|w|_{a_i}\right)\\
			  &=\prod\limits_{i=1}^{s}|[w]|_{a_i}.
\end{aligned}
\end{equation*}
where the last equality holds by Remark~\ref{RemSingleLet}.
\end{proof}

\section{Parikh Matrices of Circular Words}\label{SecParMat}
We begin this section by presenting the motivation that led to our definition of Parikh matrices for circular words (see Definition~\ref{ParMatCircularWords}).

The study of Parikh matrices in relation to conjugacy classes has been done in \cite{DHMR21}---in particular, on the Parikh matrix of the lexicographically smallest word in a conjugacy class (i.e.~Lyndon conjugate). This Parikh matrix, however, reflects only on the Lyndon conjugate and not on the other words belonging to the conjugacy class. Hence, we aim to propose a definition that takes into account, every word in the conjugacy class. With our proposed definition, two circular words can have distinct Parikh matrices yet their Lyndon conjugates share the same Parikh matrix. (It can be verified using Definition~\ref{ParMatCircularWords} that the circular words $[abcabc]$ and $[abacbc]$ over $\{a<b<c\}$ provide an example of this.)

Subramanian et al., partially motivated by a special sum of Parikh matrices 
introduced by Mateescu \cite{aM04}, have proposed the idea of defining the Parikh matrix of an array by taking the sum of the Parikh matrices of the rows (thus the order of the rows is irrelevant) while keeping the entries of the main diagonal as units \cite{SMAN13}. Since a circular word is essentially a conjugacy class, the ordering of the finite number of words in the class does not matter, and thus it seems that the definition of Parikh matrices for arrays can be adopted to circular words. However, this is not desirable as it is no different from making all the words in the conjugacy class the rows of an array (in any order) and taking its Parikh matrix. Furthermore, the ``Parikh vector" of the circular word (as necklace) is not embedded in the second diagonal of the Parikh matrix this way, contrary to what is stated in Remark~\ref{010721}.

Viewed as a necklace, a circular word is simply one word having a circular structure without a first or last letter. From a fuzzy perspective, we can view the circular word as any word in the conjugacy class with a uniform probability. Therefore, we take an average approach in our definition below and naturally, it is consistent with Definition~\ref{DefParikhCircular}.

\begin{definition}\label{ParMatCircularWords}
	Suppose $\Sigma$ is an ordered alphabet and $[w]\in \displaystyle{\Sigma_c^\ast}$. The \textit{Parikh matrix of the circular word $[w]$ with respect to $\Sigma$}, denoted $\Psi_\Sigma([w])$, is defined by
	$$\Psi_\Sigma([w])=\frac{1}{|[w]|}\sum\limits_{u\in[w]}\Psi_\Sigma(u).$$
\end{definition}

The following property, analogous to that of Parikh matrices of linear words, follows easily by Theorem~\ref{1206a}.
\begin{theorem}\label{TheoEntryParMatCirc}
	Suppose $\Sigma=\{a_1<a_2< \dotsb<a_s\}$ and $[w]\in \displaystyle{\Sigma_c^\ast}$. The matrix $\Psi_{\Sigma}([w])=M$ has the following properties:
	\begin{itemize}
		\item $M_{i,i}=1$ for each $1\leq i \leq s+1$;
		\item $M_{i,j}=0$ for each $1\leq j<i\leq s+1$;
		\item $M_{i,j+1}=\vert [w] \vert_{a_ia_{i+1}\dotsm a_j}$ for each $1\leq i\leq j \leq s$.
	\end{itemize}
\end{theorem}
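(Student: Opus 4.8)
The plan is to reduce the statement about the circular Parikh matrix entrywise to the corresponding known statement about ordinary Parikh matrices, exploiting that $\Psi_\Sigma([w])$ is defined as an average (hence a linear combination) of the matrices $\Psi_\Sigma(u)$ over $u\in[w]$. First I would observe that since matrix addition and scalar multiplication act entrywise, for every pair of indices $1\le i,j\le s+1$ we have
$$
\bigl(\Psi_\Sigma([w])\bigr)_{i,j}=\frac{1}{|[w]|}\sum_{u\in[w]}\bigl(\Psi_\Sigma(u)\bigr)_{i,j}.
$$
This is the only structural fact needed; everything else is an appeal to Theorem~\ref{1206a} applied to each linear conjugate $u$.

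Next I would handle the three bullet points one at a time. For the diagonal entries: by Theorem~\ref{1206a}, $\bigl(\Psi_\Sigma(u)\bigr)_{i,i}=1$ for every $u\in[w]$, so the average is $\frac{1}{|[w]|}\cdot|[w]|\cdot 1=1$. For the strictly-lower-triangular entries ($1\le j<i\le s+1$): again by Theorem~\ref{1206a}, each $\bigl(\Psi_\Sigma(u)\bigr)_{i,j}=0$, so the average is $0$. For the superdiagonal-and-above entries, fix $1\le i\le j\le s$; by Theorem~\ref{1206a}, $\bigl(\Psi_\Sigma(u)\bigr)_{i,j+1}=|u|_{a_ia_{i+1}\cdots a_j}$, hence
$$
M_{i,j+1}=\frac{1}{|[w]|}\sum_{u\in[w]}|u|_{a_ia_{i+1}\cdots a_j}=|[w]|_{a_ia_{i+1}\cdots a_j},
$$
where the last equality is exactly Definition~\ref{DefParikhCircular} applied to the word $v=a_ia_{i+1}\cdots a_j$. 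That completes all three cases and hence the proof.

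I do not expect any genuine obstacle here; the result is labelled as following "easily" from Theorem~\ref{1206a}, and indeed the proof is purely a matter of pushing the averaging operation through the entrywise description of a linear Parikh matrix. The only point that warrants a sentence of care is making explicit that the sum in Definition~\ref{ParMatCircularWords} is over the conjugacy class $[w]$ (i.e.\ over distinct conjugates, with multiplicity one each, there being $|[w]|$ of them), and that this is the same index set as in Definition~\ref{DefParikhCircular}, so the two averages match term by term; with that alignment noted, the identification $M_{i,j+1}=|[w]|_{a_ia_{i+1}\cdots a_j}$ is immediate. If one instead prefers the representative-based formulation of Remark~\ref{AltDef}, the identical argument goes through with the sum running over the $|w|$ cyclic shifts $w_0,\dots,w_{|w|-1}$ and the normalizing factor $\frac{1}{|w|}$, which is why no separate treatment of the non-primitive case is needed.
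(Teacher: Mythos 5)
Your proof is correct and matches the paper's intent exactly: the paper gives no explicit proof, stating only that the result ``follows easily by Theorem~\ref{1206a},'' and your argument---pushing the entrywise description of Theorem~\ref{1206a} through the average in Definition~\ref{ParMatCircularWords} and matching it against Definition~\ref{DefParikhCircular}---is precisely that easy derivation. Your closing remark about the index sets of the two averages coinciding is a sensible point of care, but nothing more needs to be said.
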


\begin{example}
	Suppose $\Sigma=\{a<b<c\}$ and  $[w]=[cabacb]$ as in Example~\ref{ExTechnique1}, then: 
	\begin{align*}
		\Psi_{\Sigma}([w])&=\dfrac{1}{6}\left[\Psi_{\Sigma}(cabacb)+\Psi_{\Sigma}(abacbc)+\cdots +\Psi_{\Sigma}(bcabac)\right]\\
		&=\dfrac{1}{6}\left[\begin{pmatrix}
			1 & 2 & 3 & 1\\
			0 & 1 & 2 & 1\\
			0 & 0 & 1 & 2\\
			0 & 0 & 0 & 1
		\end{pmatrix}+
		\begin{pmatrix}
			1 & 2 & 3 & 4\\
			0 & 1 & 2 & 3\\
			0 & 0 & 1 & 2\\
			0 & 0 & 0 & 1
		\end{pmatrix}+
		\dotsm
		+\begin{pmatrix}
			1 & 2 & 1 & 1\\
			0 & 1 & 2 & 3\\
			0 & 0 & 1 & 2\\
			0 & 0 & 0 & 1
		\end{pmatrix}\right]\\
		&=\begin{pmatrix}
			1 & 2 & 2 & \frac{4}{3} \\
			0 & 1 & 2 & 2\\
			0 & 0 & 1 & 2\\
			0 & 0 & 0 & 1
		\end{pmatrix}
		=\begin{pmatrix}
			1 & \vert [w]\vert_a & \vert [w]\vert_{ab} & \vert [w]\vert_{abc} \\
			0 &1 & \vert [w]\vert_b & \vert [w]\vert_{bc}\\
			0 & 0 & 1 & \vert [w]\vert_c\\
			0 & 0 & 0 & 1
		\end{pmatrix}.
	\end{align*}
\end{example}

We now investigate some properties of Parikh matrices of circular words, exclusively for the binary and the ternary alphabets. Our main theorem (Theorem~\ref{BinaryEntries}) shows that for the binary alphabet, $\Psi_\Sigma([w])$ depends only on the Parikh vector of $w$. Before that, we recall an early result regarding inverses of Parikh matrices. 

\begin{definition}
Suppose $\Sigma$ is an ordered alphabet with $\vert \Sigma\vert=s$ and $w\in \displaystyle{\Sigma^\ast}$. Let \mbox{$A=\Psi_\Sigma(w)$}. The \textit{alternate Parikh matrix of $w$}, denoted by $\overline{\Psi}_\Sigma(w)$, is the $(s+1)\times (s+1)$ matrix $B$ such that $B_{i,j}=(-1)^{i+j}A_{i,j}$ for all integers $1\le i,j\le s+1$.
\end{definition}

\begin{theorem}\label{InvAltLinear}\cite{MSSY01}
Suppose $\Sigma$ is an ordered alphabet and $w\in \displaystyle{\Sigma^\ast}$. Then $$[\Psi_\Sigma(w)]^{-1}=\overline{\Psi}_\Sigma(\mi(w)).$$
\end{theorem}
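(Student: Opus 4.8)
The plan is to exploit the fact that $\Psi_\Sigma$ is a morphism, reducing the whole statement to the single-letter generators together with a clean reformulation of the alternating operation $A\mapsto\overline{A}$.

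First I would settle the single-letter case. For each $1\le q\le s$, the matrix $\Psi_\Sigma(a_q)$ is $I_{s+1}$ with an additional $1$ in position $(q,q+1)$; writing $E_{q,q+1}$ for the matrix unit supported at that entry, we have $\Psi_\Sigma(a_q)=I_{s+1}+E_{q,q+1}$. Since $q<q+1$ we get $E_{q,q+1}^2=0$, so $[\Psi_\Sigma(a_q)]^{-1}=I_{s+1}-E_{q,q+1}$. On the other hand, the unique off-diagonal entry of $\Psi_\Sigma(a_q)$ lies at $(q,q+1)$ and acquires the sign $(-1)^{q+(q+1)}=-1$ under the alternating operation, while the diagonal is unchanged; hence $\overline{\Psi}_\Sigma(a_q)=I_{s+1}-E_{q,q+1}$. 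As $\mi(a_q)=a_q$, this establishes $[\Psi_\Sigma(a_q)]^{-1}=\overline{\Psi}_\Sigma(\mi(a_q))$ for every letter.

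Next I would show that the alternating operation is multiplicative. Letting $D$ be the diagonal matrix with $D_{i,i}=(-1)^i$, a direct check gives $\overline{A}=DAD^{-1}$ for every $A\in\mathcal{M}_{s+1}$, since $(DAD^{-1})_{i,j}=(-1)^{i-j}A_{i,j}=(-1)^{i+j}A_{i,j}$. Because conjugation by a fixed matrix preserves products, $\overline{AB}=\overline{A}\,\overline{B}$, and in particular $\overline{\Psi}_\Sigma$ is itself a morphism on $\Sigma^\ast$. Spotting this conjugation identity is the crux of the argument: once it is in hand, no induction or entrywise computation is needed.

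Finally I would combine the pieces. Writing $w=a_{\ell_1}a_{\ell_2}\cdots a_{\ell_n}$, the morphism property gives $\Psi_\Sigma(w)=\Psi_\Sigma(a_{\ell_1})\cdots\Psi_\Sigma(a_{\ell_n})$, so inverting reverses the order of factors and, by the single-letter case,
$$[\Psi_\Sigma(w)]^{-1}=[\Psi_\Sigma(a_{\ell_n})]^{-1}\cdots[\Psi_\Sigma(a_{\ell_1})]^{-1}=\overline{\Psi}_\Sigma(a_{\ell_n})\cdots\overline{\Psi}_\Sigma(a_{\ell_1}).$$
By multiplicativity of $\overline{\Psi}_\Sigma$ the right-hand side equals $\overline{\Psi}_\Sigma(a_{\ell_n}\cdots a_{\ell_1})=\overline{\Psi}_\Sigma(\mi(w))$, which is exactly the claim. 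The main point requiring care is bookkeeping the two independent order reversals—one arising from matrix inversion and one from word reversal—which cancel precisely against each other; an equivalent route that sidesteps this is a short induction on $|w|$ using $\Psi_\Sigma(a_q w')=\Psi_\Sigma(a_q)\Psi_\Sigma(w')$ together with the single-letter base case.
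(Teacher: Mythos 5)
Your argument is correct and complete: the single-letter computation $\Psi_\Sigma(a_q)=I_{s+1}+E_{q,q+1}$ with $E_{q,q+1}^2=0$, the identification $\overline{A}=DAD^{-1}$ for $D=\operatorname{diag}((-1)^i)$ (which makes the alternating operation multiplicative), and the cancellation of the two order reversals all check out. Note, however, that the paper itself gives no proof of this statement --- it is quoted from the reference [MSSY01] as a known result --- so there is no in-paper argument to compare against; your proposal is a valid self-contained justification, and the conjugation identity $\overline{AB}=\overline{A}\,\overline{B}$ is exactly the right device to avoid an entrywise induction.
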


\begin{remark}\label{RemarkInvAlt}
Since the determinant of a Parikh matrix is always one, it follows that the inverse of a Parikh matrix is simply its adjoint. Thus, it can be verified based on Theorem~\ref{1206a} that for $\Sigma=\{a<b<c\}$ and $w\in  \displaystyle{\Sigma^\ast}$, we have 
$[\Psi_\Sigma(w)]^{-1}=$ $$\begin{pmatrix}
	1 & -|w|_a & |w|_a|w|_b-|w|_{ab} & -|w|_a|w|_b|w|_c+|w|_a|w|_{bc}+|w|_{ab}|w|_c-|w|_{abc}\\
	0 &1 & -|w|_b & |w|_b|w|_c-|w|_{bc}\\
	0 & 0 & 1 & -|w|_c\\
	0 & 0 & 0 & 1
\end{pmatrix}.$$
Furthermore, by Theorem~\ref{InvAltLinear}, this matrix is equal to $\overline{\Psi}_\Sigma(\mi(w))$. 
In particular, this implies the identity
$$\vert \mi(w)\vert_{abc} =|w|_a|w|_b|w|_c-|w|_a|w|_{bc}-|w|_{ab}|w|_c+|w|_{abc}.$$
This observation has also been highlighted in \cite{APT19} and \cite{MSY04}.
\end{remark}

\begin{proposition}\label{PropInvAndAlt}
Suppose $\Sigma$ is an ordered alphabet with $|\Sigma|\le 3$ and $w\in \displaystyle{\Sigma^\ast}$. Then $$[\Psi_\Sigma([w])]^{-1}=\overline{\Psi}_\Sigma([\mi(w)]).$$
\end{proposition}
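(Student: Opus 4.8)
## Proof Proposal

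The plan is to reduce the statement about circular Parikh matrices to the corresponding statement for linear words, namely Theorem~\ref{InvAltLinear}, by exploiting linearity of both sides in the appropriate sense. The key observation is that $\Psi_\Sigma([w])$ is, by Definition~\ref{ParMatCircularWords}, an average of the ordinary Parikh matrices $\Psi_\Sigma(u)$ over the conjugacy class, and similarly $\overline{\Psi}_\Sigma([\mi(w)])$ should be an average of the alternate Parikh matrices $\overline{\Psi}_\Sigma(\mi(u))$ over the same class (here one first checks that $[\mi(w)] = \{\mi(u) : u \in [w]\}$, which is immediate since reversal sends a cyclic shift of $w$ to a cyclic shift of $\mi(w)$, and that $|[\mi(w)]| = |[w]|$). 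If the operation of taking the inverse commuted with averaging, we would be done instantly; it does not, so the real work is to find a substitute.

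First I would record the necessary preliminary facts: (i) $u \mapsto \mi(u)$ is a bijection from $[w]$ onto $[\mi(w)]$, so $\overline{\Psi}_\Sigma([\mi(w)]) = \frac{1}{|[w]|}\sum_{u\in[w]} \overline{\Psi}_\Sigma(\mi(u))$; and (ii) by Theorem~\ref{InvAltLinear}, $\overline{\Psi}_\Sigma(\mi(u)) = [\Psi_\Sigma(u)]^{-1}$ for each $u$. Hence the right-hand side of the claim equals $\frac{1}{|[w]|}\sum_{u\in[w]} [\Psi_\Sigma(u)]^{-1}$. The claim then becomes the assertion that $\bigl(\frac{1}{n}\sum_u \Psi_\Sigma(u)\bigr)^{-1} = \frac{1}{n}\sum_u [\Psi_\Sigma(u)]^{-1}$, where $n = |[w]|$. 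This is false for arbitrary matrices, so we must use the special structure available when $|\Sigma| \le 3$: the matrices are $(s+1)\times(s+1)$ with $s \le 2$, so at most $3\times 3$ and at most $4\times 4$ after the circular construction—wait, $s \le 3$ gives $4\times 4$. In any case, Remark~\ref{RemarkInvAlt} gives an explicit closed form for the inverse of a $4\times 4$ Parikh matrix purely in terms of the entries $|w|_a, |w|_b, |w|_c, |w|_{ab}, |w|_{bc}, |w|_{abc}$, and by Remark~\ref{RemSingleLet} the single-letter counts are conjugacy-invariant.

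The heart of the argument, and the step I expect to be the main obstacle, is verifying that the explicit inverse formula is compatible with averaging. Concretely: write $x_a = |[w]|_a = |u|_a$ (constant over the class), $x_b, x_c$ similarly, and let $\overline{x}_{ab} = |[w]|_{ab}$, $\overline{x}_{bc} = |[w]|_{bc}$, $\overline{x}_{abc} = |[w]|_{abc}$ be the \emph{averaged} length-two and length-three counts. By Theorem~\ref{TheoEntryParMatCirc}, the $(1,3)$, $(2,4)$ and $(1,4)$ entries of $[\Psi_\Sigma([w])]^{-1}$ (computed via Remark~\ref{RemarkInvAlt} with $w$'s entries replaced by the circular entries) are the polynomials $x_ax_b - \overline{x}_{ab}$, $x_bx_c - \overline{x}_{bc}$, and $-x_ax_bx_c + x_a\overline{x}_{bc} + \overline{x}_{ab}x_c - \overline{x}_{abc}$. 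On the other hand, the corresponding entries of $\frac{1}{n}\sum_u[\Psi_\Sigma(u)]^{-1}$ are the averages over $u$ of $x_ax_b - |u|_{ab}$, etc. The degree-one-in-the-variable-entries cases ($x_ax_b - \overline{x}_{ab}$ and $x_bx_c - \overline{x}_{bc}$) match immediately because $x_a, x_b, x_c$ are constant and averaging is linear. The only genuinely nontrivial check is the $(1,4)$ entry, where one needs $\frac{1}{n}\sum_u\bigl(x_a|u|_{bc} + |u|_{ab}x_c - |u|_{abc}\bigr) = x_a\overline{x}_{bc} + \overline{x}_{ab}x_c - \overline{x}_{abc}$; but since $x_a$ and $x_c$ are scalars independent of $u$, this too follows from linearity of the average together with the definitions $\overline{x}_{ab} = \frac{1}{n}\sum_u|u|_{ab}$, $\overline{x}_{bc} = \frac{1}{n}\sum_u|u|_{bc}$, $\overline{x}_{abc} = \frac{1}{n}\sum_u|u|_{abc}$. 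Thus all entries agree. The $|\Sigma| = 2$ and $|\Sigma| = 1$ cases are handled the same way (or are subsumed, by projecting). I would present the ternary case in full and remark that the smaller cases are analogous and simpler. The crucial hypothesis $|\Sigma| \le 3$ enters precisely because for larger alphabets the inverse-entry polynomials would contain products like $|u|_{ab}|u|_{cd}$ of two non-constant counts, and the average of a product is not the product of averages—so the reduction genuinely breaks down, which is worth flagging.
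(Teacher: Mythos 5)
Your proposal is correct and follows essentially the same route as the paper: both reduce to the linear-word statement via Theorem~\ref{InvAltLinear}, then verify entrywise (the $(1,4)$-entry being the only nontrivial one) that the explicit polynomial formula for the inverse in Remark~\ref{RemarkInvAlt} is compatible with averaging over the conjugacy class, because the single-letter counts are constant on the class. Your closing observation about why $|\Sigma|\le 3$ is essential—products of two non-constant counts would appear for larger alphabets—matches the paper's counterexample over $\{a<b<c<d\}$.
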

\begin{proof}
We show only the proof for the ternary alphabet as the case for the binary alphabet follows by similar argument. Let $\Sigma=\{a<b<c\}$.

As in Remark~\ref{RemarkInvAlt}, since the inverse of a Parikh matrix $\Psi_{\Sigma}([w])$ is its adjoint,  it can be verified based on Theorem~\ref{TheoEntryParMatCirc} that the $(1,4)$-entry of $[\Psi_\Sigma([w])]^{-1}$ is $-(|[w]|_a|[w]|_b|[w]|_c-|[w]|_a|[w]|_{bc}-|[w]|_{ab}|[w]|_c+|[w]|_{abc})$. On the other hand, the $(1,4)$-entry of $\overline{\Psi}_\Sigma([\mi(w)])$ is $-|[\mi(w)]|_{abc}$. Thus it remains to see that
\begin{equation*}
	\begin{aligned}
		|[\mi(w)]|_{abc}&=\dfrac{1}{|[\mi(w)]|}\sum\limits_{u\in [\mi(w)]}|u|_{abc}\\
		&=\dfrac{1}{|[w]|}\sum\limits_{u\in [w]}|\mi(u)|_{abc}\\
		&=\dfrac{1}{|[w]|}\sum\limits_{u\in [w]}(|u|_a|u|_b|u|_c-|u|_a|u|_{bc}-|u|_{ab}|u|_c+|u|_{abc}) \text{ (By Remark~\ref{RemarkInvAlt})}\\
		&=\dfrac{1}{|[w]|}(|w|_a|w|_b|w|_c |[w]|-|w|_a\sum\limits_{u\in [w]}|u|_{bc}-|w|_c\sum\limits_{u\in [w]}|u|_{ab}+\sum\limits_{u\in [w]}|u|_{abc})\\
		&=|[w]|_a|[w]|_b|[w]|_c-|[w]|_a|[w]|_{bc}-|[w]|_{ab}|[w]|_c+|[w]|_{abc}.
		\end{aligned}
\end{equation*}
Likewise, it can be shown that the two matrices agree at the other entries and thus they are equal.		
\end{proof}

\begin{remark}\label{RemarkPower}
For any $A\in \mathcal{M}_4$ and positive integer $p$, by Theorem~3.1 in \cite{GT17d}, it holds that $ \displaystyle{A^p}\in \mathcal{M}_4$ and
\begin{itemize}
	\item $A^p_{1,2}= pA_{1,2}$, $A^p_{2,3}= pA_{2,3}$, $A^p_{3,4}= pA_{3,4}$,\\
	\item $A^p_{1,3}= pA_{1,3}+\binom{p}{2}A_{1,2}A_{2,3}$,\\
	\item $A^p_{2,4}= pA_{2,4}+\binom{p}{2}A_{2,3}A_{3,4}$,\\
	\item $A^p_{1,4}= pA_{1,4}+\binom{p}{2}(A_{1,2}A_{2,4}+A_{1,3}A_{3,4})+ \binom{p}{3}A_{1,2}A_{2,3}A_{3,4}$.
\end{itemize}
A similar result holds for any $A\in \mathcal{M}_3$.
\end{remark}

\begin{proposition}\label{PropPower}
Suppose $\Sigma$ is an ordered alphabet with $|\Sigma|\le 3$ and $w\in \displaystyle{\Sigma^\ast}$. For any positive integer $p$, $\Psi_{\Sigma}([w^p])= \left[\Psi_{\Sigma}([w])  \right]^p$.
\end{proposition}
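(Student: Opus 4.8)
The plan is to push the problem down to the cyclic shifts of $w$, use that $\Psi_\Sigma$ is a monoid morphism, and then exploit the rigid shape of Parikh matrices over an alphabet of size at most three. Write $n=|w|$ and let $w_0,\dots,w_{n-1}$ denote the cyclic shifts of $w$. First I would record the ``cyclic shift'' form of Definition~\ref{ParMatCircularWords}, namely $\Psi_\Sigma([w])=\frac1n\sum_{i=0}^{n-1}\Psi_\Sigma(w_i)$; this follows from the definition exactly as Remark~\ref{AltDef} does for the scalar count, since if $w=v^k$ with $v$ primitive then each of the $|[w]|$ distinct conjugates is repeated exactly $k$ times among $w_0,\dots,w_{n-1}$ and $n=k\,|[w]|$.

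Next I would observe, by inspecting words directly, that for each $0\le i\le n-1$ the $i$-th cyclic shift of $w^p$ equals $(w_i)^p$, and that the sequence of cyclic shifts of $w^p$ is periodic in the shift index with period $n$. Since $|w^p|=pn$, applying the cyclic-shift formula to $w^p$, then using this periodicity together with the morphism property of $\Psi_\Sigma$, gives
$$\Psi_\Sigma([w^p])=\frac{1}{pn}\sum_{i=0}^{pn-1}\Psi_\Sigma\big((w^p)_i\big)=\frac1n\sum_{i=0}^{n-1}\Psi_\Sigma(w_i^{\,p})=\frac1n\sum_{i=0}^{n-1}\big[\Psi_\Sigma(w_i)\big]^p.$$
On the other hand $\big[\Psi_\Sigma([w])\big]^p=\big[\frac1n\sum_{i=0}^{n-1}\Psi_\Sigma(w_i)\big]^p$, so the proposition is reduced to proving
$$\frac1n\sum_{i=0}^{n-1}A_i^{\,p}=\Big(\frac1n\sum_{i=0}^{n-1}A_i\Big)^{p},\qquad\text{where } A_i:=\Psi_\Sigma(w_i).$$

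For this last identity I would invoke Remark~\ref{RemarkPower}. Conjugate words have the same letter counts, so all the $A_i$ share their main-diagonal entries ($=1$) and first-superdiagonal entries ($=$ the Parikh vector, cf.\ Remark~\ref{010721}); the only entries that vary with $i$ are $A_{1,3},A_{2,4},A_{1,4}$ in the ternary case (and only $A_{1,3}$ in the binary case). The formulas of Remark~\ref{RemarkPower} are identities stemming from matrix multiplication, hence valid for every upper-triangular matrix with unit diagonal and rational entries, in particular for $\bar A:=\frac1n\sum_i A_i$. Comparing $\bar A^{\,p}$ with $\frac1n\sum_i A_i^{\,p}$ entry by entry via these formulas, one sees that every monomial occurring is at most linear in the varying entries---e.g.\ the $(1,4)$-entry is the sum of $p\,A_{1,4}$, $\binom p2 A_{1,2}A_{2,4}$, $\binom p2 A_{1,3}A_{3,4}$ and $\binom p3 A_{1,2}A_{2,3}A_{3,4}$, each containing at most one factor among $A_{1,3},A_{2,4},A_{1,4}$---so averaging over $i$ commutes with the formula. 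This proves the identity in the ternary case, and the binary case is identical with $3\times3$ matrices (or follows directly from Theorem~\ref{BinaryEntries}).

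The main obstacle is exactly this commutation of averaging with the power formulas: it works only because no power-formula entry is quadratic or higher in the conjugacy-varying entries, which is precisely what the hypothesis $|\Sigma|\le3$ buys. For $|\Sigma|\ge4$ the analogous argument breaks---the power formula for, say, the $(1,5)$-entry contains a term $A_{1,3}A_{3,5}$ both of whose factors vary among conjugates---so a different idea would be needed there. (Equivalently, the reduced identity can be proved structurally: writing $A_i=B+C_i$ with $B$ the conjugacy-invariant part and $C_i$ supported on the entries at distance at least $2$ from the diagonal, the size restriction forces $C_iB^{\,m}C_j=0$ for all $m\ge0$ and all $i,j$, whence $A_i^{\,p}=B^p+\sum_{k=0}^{p-1}B^kC_iB^{\,p-1-k}$ and the claimed averaging is immediate.)
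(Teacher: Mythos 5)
Your proof is correct and follows essentially the same route as the paper's: both rest on Remark~\ref{RemarkPower} together with the observation that every monomial in the power formulas contains at most one conjugation-dependent entry, so that averaging over the conjugacy class commutes with taking the $p$-th power. The paper carries this out by verifying the $(1,4)$-entry explicitly in terms of subword counts (pulling the invariant factors $|w|_a$ and $|w|_c$ out of the sums over conjugates), whereas you package the same mechanism as the matrix identity $\frac{1}{n}\sum_i A_i^{\,p}=\left(\frac{1}{n}\sum_i A_i\right)^{p}$; the substance is identical.
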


\begin{proof}
We show only the proof for the case $|\Sigma|=3$ as the other case follows by similar argument. Let $\Sigma=\{a<b<c\}$ and fix an arbitrary positive integer $p$.
By Theorem~\ref{TheoEntryParMatCirc} and Remark~\ref{RemarkPower}, since 
$\Psi_{\Sigma}([w])\in \mathcal{M}_4$,
 it follows that the $(1,4)$-entry of $\left[ \Psi_{\Sigma}([w])  \right]^p$ is
$$p | [w]|_{abc}+ \binom{p}{2} ( |[w]|_a |[w]|_{bc}+  |[w]|_{ab}|[w]|_{c})+\binom{p}{3}|[w]|_a|[w]|_{b}|[w]|_c.$$ 
On the other hand, the $(1,4)$-entry of $\Psi_{\Sigma}([w^p])$ is
\begin{equation*}
\begin{aligned}
|[w^p]|_{abc}
&=\dfrac{1}{|[w^p]|}\sum\limits_{u\in [w^p]}|u|_{abc}\\
&=\dfrac{1}{|[w]|}\sum\limits_{v\in [w]}|v^p|_{abc}\\
&=\dfrac{1}{|[w]|}\sum\limits_{v\in [w]}\left(p|v|_{abc}+\binom{p}{2}|v|_a|v|_{bc}
+\binom{p}{2}|v|_{ab}|v|_{c} + \binom{p}{3}|v|_a|v|_{b}|v|_c \right)\\
&=\dfrac{1}{|[w]|}
\left[p \sum\limits_{v\in [w]}|v|_{abc}+ \binom{p}{2} |w|_a \sum\limits_{v\in [w]}|v|_{bc} +  \binom{p}{2} |w|_c \sum\limits_{v\in [w]}|v|_{ab}+  \binom{p}{3}|w|_a|w|_{b}|w|_c | [w]|  \right]
\\
&=p | [w]|_{abc}+ \binom{p}{2} |[w]|_a |[w]|_{bc}+  \binom{p}{2} |[w]|_{ab} |[w]|_{c}+\binom{p}{3}|[w]|_a|[w]|_{b}|[w]|_c,  
\end{aligned}
\end{equation*}	
where the third equality follows by Remark~\ref{RemarkPower} as $\Psi_{\Sigma}(v)\in \mathcal{M}_4$ and 
$\Psi_{\Sigma}(v^p)= \left[\Psi_{\Sigma}(v)  \right]^p$ since the Parikh matrix mapping of linear words is a morphism.
Hence, $\Psi_{\Sigma}([w^p])$ and $\left[ \Psi_{\Sigma}([w])\right]^p$ agree at the $(1,4)$-entry. Likewise, it can be shown that the two matrices agree at the other entries and thus they are equal.		
\end{proof}

The following simple example shows that Proposition~\ref{PropInvAndAlt} and Proposition~\ref{PropPower} cannot be extended to cater for larger alphabets.
\begin{example}
Consider the circular word $[w]=[abcd]$ over the ordered alphabet \mbox{$\{a<b<c<d\}$.} The matrices $A=[\Psi_\Sigma([w])]^{-1}$ and $B=\overline{\Psi}_\Sigma([\mi(w)])$  agree everywhere except at the top right entry, where $A_{1,5}=\frac{1}{16}\neq 0=B_{1,5}$.
Similarly, the matrices $C=\Psi_{\Sigma}([w^2])$ and $D=\left[\Psi_{\Sigma}([w])\right]^2$ 
agree everywhere except at the top right entry, where $C_{1,5}=2\neq\frac{33}{16}=D_{1,5}$.
\end{example}

\begin{theorem}\label{BinaryEntries}
Suppose $\Sigma=\{a<b\}$ and $w\in \displaystyle{\Sigma^\ast}$. Then
\begin{equation*}
\Psi_\Sigma([w])=
\begin{pmatrix}
1 & |w|_a & \dfrac{|w|_a|w|_b}{2}\\
0 & 1     & |w|_b\\
0 & 0     & 1
\end{pmatrix}.
\end{equation*}
\end{theorem}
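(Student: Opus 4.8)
The idea is to reduce the whole matrix identity to a single entry and then evaluate that entry by an antisymmetrization over cyclic shifts. Throughout write $n=|w|$ (if $w=\lambda$ then $\Psi_\Sigma([w])=I_3$ and there is nothing to prove, so assume $n\ge 1$).

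\emph{Step 1 (reduction to one entry).} By Theorem~\ref{TheoEntryParMatCirc}, $\Psi_\Sigma([w])$ is upper triangular with $1$'s on the diagonal, with $(1,2)$-entry $|[w]|_a$, $(2,3)$-entry $|[w]|_b$, and $(1,3)$-entry $|[w]|_{ab}$. By Remark~\ref{RemSingleLet}, $|[w]|_a=|w|_a$ and $|[w]|_b=|w|_b$. Hence the theorem is equivalent to the single identity $|[w]|_{ab}=\tfrac12|w|_a|w|_b$.

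\emph{Step 2 (pairing $ab$ with $ba$).} Using Remark~\ref{AltDef}, $|[w]|_{ab}=\tfrac1n\sum_{i=0}^{n-1}|w_i|_{ab}$ and $|[w]|_{ba}=\tfrac1n\sum_{i=0}^{n-1}|w_i|_{ba}$, where $w_i$ denotes the $i$-th cyclic shift of $w$. Applying Proposition~\ref{CombPropLinWord} to each $w_i$ with $s=2$ gives $|w_i|_{ab}+|w_i|_{ba}=|w_i|_a|w_i|_b=|w|_a|w|_b$, so $|[w]|_{ab}+|[w]|_{ba}=|w|_a|w|_b$. Thus it remains only to prove the symmetry $|[w]|_{ab}=|[w]|_{ba}$, i.e. $\sum_{i=0}^{n-1}\bigl(|w_i|_{ab}-|w_i|_{ba}\bigr)=0$.

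\emph{Step 3 (the antisymmetrization, the crux).} For a binary word $u=u[1]u[2]\cdots u[n]$ set $\epsilon_j=1$ if $u[j]=a$ and $\epsilon_j=-1$ if $u[j]=b$, so that $[u[j]=a]=\tfrac{1+\epsilon_j}{2}$ and $[u[j]=b]=\tfrac{1-\epsilon_j}{2}$. Expanding both counts as sums over pairs of positions,
\[
|u|_{ab}-|u|_{ba}=\sum_{1\le i<j\le n}\frac{(1+\epsilon_i)(1-\epsilon_j)-(1-\epsilon_i)(1+\epsilon_j)}{4}=\frac12\sum_{1\le i<j\le n}(\epsilon_i-\epsilon_j).
\]
Now apply this with $u=w_m$ and sum over $m=0,\dots,n-1$, exchanging the order of summation. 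For each fixed pair $i<j$, the letters $w_0[i],w_1[i],\dots,w_{n-1}[i]$ are, in some order, exactly the $n$ letters of $w$ (indeed $w_m[i]=w[((m+i-1)\bmod n)+1]$ is a bijection in $m$), so $\sum_{m=0}^{n-1}\epsilon_i^{(m)}=|w|_a-|w|_b$, a quantity independent of $i$. Hence $\sum_{m=0}^{n-1}(\epsilon_i^{(m)}-\epsilon_j^{(m)})=0$ for every pair $i<j$, and therefore $\sum_{m=0}^{n-1}(|w_m|_{ab}-|w_m|_{ba})=0$. Combined with Step~2 this yields $|[w]|_{ab}=|[w]|_{ba}=\tfrac12|w|_a|w|_b$, which is what Step~1 needed.

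\emph{The main obstacle} is exactly the symmetry $|[w]|_{ab}=|[w]|_{ba}$ of Step~2--3; it is not a formal consequence of counting alone (it is consistent with, but not implied by, either $|w_i|_{ab}+|w_i|_{ba}=|w|_a|w|_b$ or the reversal identity $|\mi(u)|_{ab}=|u|_{ba}$ implicit in Remark~\ref{RemarkInvAlt}), so some genuinely positional input is required. The linear expansion above supplies it; conceptually, the same fact can be seen from the observation that the multiset of cyclic distances from the $a$-positions to the $b$-positions on the necklace is invariant under $k\mapsto n-k$ precisely because $\{j:u[j]=a\}$ and $\{j:u[j]=b\}$ partition $\mathbb{Z}/n\mathbb{Z}$. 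This partition feature has no analogue once $|\Sigma|\ge 3$, which is consistent with the failure examples recorded earlier for ternary and larger alphabets.
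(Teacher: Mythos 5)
Your proof is correct, and it takes a genuinely different route from the paper's. The paper reduces to the $(1,3)$-entry just as you do, but then assumes $w$ primitive and computes $\sum_{u\in[w]}|u|_{ab}$ head-on: for each pair of positions $i,j$ with $w[i]=a$ and $w[j]=b$ it determines the exact number $d(i,j)$ of conjugates in which that pair contributes to the $ab$-count (namely $|w|+i-j$ or $i-j$ according to whether $i<j$ or $i>j$), and then evaluates the double sum via arithmetic series; the non-primitive case is handled separately through Proposition~\ref{PropPower} and Remark~\ref{RemarkPower}. You instead exploit the symmetry $|[w]|_{ab}=|[w]|_{ba}$: Proposition~\ref{CombPropLinWord} gives $|w_i|_{ab}+|w_i|_{ba}=|w|_a|w|_b$ for every shift, and the $\pm1$ encoding linearizes the difference $|u|_{ab}-|u|_{ba}$ so that summing over all $n$ cyclic shifts kills it, since for each fixed position the letters appearing there across the shifts are just the letters of $w$. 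This buys you two things: you avoid the explicit distance bookkeeping and the arithmetic-series computation entirely, and --- because Remark~\ref{AltDef} averages over all $n$ shifts rather than over the distinct conjugates --- you need no primitive/non-primitive case split and no appeal to the power formulas. What the paper's computation buys in exchange is the explicit quantity $d(i,j)$, which records positional information that a symmetry argument discards; your closing observation correctly identifies why the symmetry (the $a$- and $b$-positions partitioning $\mathbb{Z}/n\mathbb{Z}$) is special to the binary case. The argument is complete as written; the only point worth making explicit is that $|[w]|_{ba}$ is legitimately covered by Definition~\ref{DefParikhCircular} even though it is not an entry of the Parikh matrix, which you implicitly use and which is indeed fine.
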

\begin{proof}
First, we assume that $w$ is primitive.
Apart from the top right entry, the other entries easily follow from Theorem~\ref{TheoEntryParMatCirc} and Remark~\ref{RemSingleLet}. Thus it remains to show that $$|[w]|_{ab}=\dfrac{1}{|[w]|}\sum\limits_{u\in [w]}|u|_{ab}=\dfrac{|w|_a|w|_b}{2}.$$ 

If $|w|_a=0$ or $|w|_b=0$, then the conclusion holds. Assume $|w|_a \ge 1$ and $|w|_b\ge 1$. 

Let $i_1<i_2<\ldots <i_{|w|_a}$ be integers such that $w[i_k]=a$ for all integers $1\le k\le |w|_a$. Similarly, let $j_1<j_2<\ldots <j_{|w|_b}$ be integers such that $w[j_k]=b$ for all integers $1\le k\le |w|_b$. Let $I=\{i_1,i_2,\ldots ,i_{|w|_a}\}$ and $J=\{j_1,j_2,\ldots ,j_{|w|_b}\}$. 

For every pair of integers $i\in I$ and $j\in J$, define $d(i,j)$ to be the number of words $u\in[w]$ such that the pair of letters $w[i]$ and $w[j]$ is counted towards the value of $|u|_{ab}$. 
Note that $\sum_{u\in [w]}|u|_{ab}=\sum_{h=1}^{|w|_a}\sum_{k=1}^{|w|_b}d(i_h,j_k)$. Furthermore, by some simple observation and due to our assumption that $w$ is primitive, we have 
$d(i,j)=\begin{cases}
|w|+i-j,	&\text{ if } i<j\\
i-j,		&\text{ if } i>j.
\end{cases}$

Fix an arbitrary integer $1\le h\le |w|_a$. Then $w=xw[i_h]y$ for some $x,y\in \displaystyle{\Sigma^\ast}$. We have $|xw[i_h]|_b=|xw[i_h]|-|xw[i_h]|_a=i_h-h$. Thus for every integer $1\le k\le i_h-h$, we have $i_h>j_k$. On the other hand, for every integer $i_h-h+1\le k\le |w|_b$, we have $i_h<j_k$. Following this, it holds that
\begin{equation*}
\begin{aligned}
\sum_{k=1}^{|w|_b}d(i_h,j_k)&=\sum_{\substack{1\le k\le |w|_b\\i_h>j_k}}d(i_h,j_k)+\sum_{\substack{1\le k\le |w|_b\\i_h<j_k}}d(i_h,j_k)\\
							&=\sum_{k=1}^{i_h-h}(i_h-j_k)+\sum_{k=i_h-h+1}^{|w|_b}(|w|+i_h-j_k)\\
							&=|w|(|w|_b-i_h+h)+|w|_bi_h-\sum_{k=1}^{|w|_b}j_k\\
							&=|w|_b|w|+|w|h-|w|_ai_h-\sum_{k=1}^{|w|_b}j_k. 
\end{aligned}
\end{equation*}
It remains to see that  
\begin{equation*}
\begin{aligned}
\sum_{u\in [w]}|u|_{ab}&=\sum_{h=1}^{|w|_a}\sum_{k=1}^{|w|_b}d(i_h,j_k)\\
                       &=|w||w|_a|w|_b+|w|\sum_{h=1}^{|w|_a}h-|w|_a\left(\sum_{h=1}^{|w|_a}i_h+\sum_{k=1}^{|w|_b}j_k\right)\\
                       &=|w||w|_a|w|_b+|w|\dfrac{|w|_a(|w|_a+1)}{2}-|w|_a\dfrac{|w|(|w|+1)}{2}\\
                       &=\dfrac{|w||w|_a}{2}(2|w|_b+|w|_a+1-|w|-1)\\
                       &=\dfrac{|w||w|_a|w|_b}{2}.
\end{aligned}
\end{equation*}
That is to say, $| [w]|_{ab}=\dfrac{1}{|w|}\sum\limits_{u\in[w]}|u|_{ab}=\dfrac{|w|_a|w|_b}{2}$ as required. 

Now, assume $w$ is not primitive. Then $w=v^p$ for some primitive word $v\in \displaystyle{\Sigma^\ast}$ and integer $p\ge 2$. 
By Proposition~\ref{PropPower},
$\Psi_{\Sigma}([w])=\Psi_{\Sigma}([v^p])=   [\Psi_{\Sigma}([v])]^p$.
Since $v$ is primitive, by what we have shown above, $\Psi_{\Sigma}([v])
= \begin{psmallmatrix}
1 & |v|_a & \frac{|v|_a|v|_b}{2}\\
0 & 1     & |v|_b\\
0 & 0     & 1
\end{psmallmatrix}.$
By Remark~\ref{RemarkPower},
$$
\Psi_{\Sigma}([w])
= \left[\begin{pmatrix}
1 & |v|_a & \dfrac{|v|_a|v|_b}{2}\\
0 & 1     & |v|_b\\
0 & 0     & 1
\end{pmatrix}\right]^p
= \begin{pmatrix}
1 & p|v|_a & \dfrac{p|v|_a|v|_b}{2}+\binom{p}{2}|v|_a|v|_b\\
0 & 1     & p|v|_b\\
0 & 0     & 1
\end{pmatrix}.$$
It follows that $\Psi_{\Sigma}([w])$ has the required form as $$\dfrac{p|v|_a|v|_b}{2}+\dfrac{p(p-1)}{2}|v|_a|v|_b=\dfrac{p|v|_a\cdot p|v|_b}{2}=\dfrac{|w|_a|w|_b}{2}.$$

Thus the conclusion holds.
\end{proof}

\begin{remark}
 The identity $|[w]|_{ab}=
 \frac{|w|_a|w|_b}{2}$ presented in Theorem~\ref{BinaryEntries} does not hold in general for circular words over larger alphabets. This is because the number and the position of other letters, apart from $a$ and $b$, in a circular word affect the count of subword $ab$ in it. In fact, 
 $|[w]|_{ab} $ need not equal $|[\pi_{\{a,b\}}(w)]|_{ab}$ (see Definition~\ref{DefProjection}); for example,
 $|[abc]|_{ab}= \frac{2}{3}$ but
 $|[ab]|_{ab}=1$.
\end{remark}

By Theorem~\ref{BinaryEntries}, one can see that the Parikh matrix of a binary circular word $[w]$ over the ordered alphabet $\{a<b\}$ depends only on $|w|_a$, since the number of occurrences of the other letter is simply $|w|-|w|_a$. 
A simple observation that follows is that the number of distinct Parikh matrices corresponding to binary circular words of length $n$ is $n+1$.

The original Parikh matrix mapping of linear words is a morphism by default of its definition. However, the Parikh matrix mapping of circular words is not a morphism. In fact, the equality $\Psi_{\Sigma}([u] [v])=\Psi_{\Sigma}([u])\Psi_{\Sigma}([v])$ does not make sense as there is no canonical concatenation of two circular words. However, it is natural to ask whether $\Psi_{\Sigma}([uv])=\Psi_{\Sigma}([u])\Psi_{\Sigma}([v])$ holds generally. Our next corollary provides a negative answer to this.  

\begin{corollary}\label{ConditionBinary}
Suppose $\Sigma=\{a<b\}$ and $u,v\in \displaystyle{\Sigma^\ast}$. We have $$\Psi_{\Sigma}([uv])=\Psi_{\Sigma}([u])\Psi_{\Sigma}([v])$$ if and only if $|u|_a|v|_b=|v|_a|u|_b$.
\end{corollary}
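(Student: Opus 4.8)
The plan is to reduce everything to the binary formula from Theorem~\ref{BinaryEntries} and compare the two $3\times 3$ matrices entrywise. Write $\alpha_u=|u|_a$, $\beta_u=|u|_b$ and similarly $\alpha_v,\beta_v$, so that $|uv|_a=\alpha_u+\alpha_v$ and $|uv|_b=\beta_u+\beta_v$. First I would invoke Theorem~\ref{BinaryEntries} three times to write out $\Psi_\Sigma([u])$, $\Psi_\Sigma([v])$ and $\Psi_\Sigma([uv])$ explicitly; in particular the $(1,3)$-entries are $\tfrac12\alpha_u\beta_u$, $\tfrac12\alpha_v\beta_v$, and $\tfrac12(\alpha_u+\alpha_v)(\beta_u+\beta_v)$ respectively. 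Then I would compute the ordinary matrix product $\Psi_\Sigma([u])\Psi_\Sigma([v])$ using the standard multiplication in $\mathcal M_3$.

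The key observation is that all entries of $\Psi_\Sigma([uv])$ and of the product $\Psi_\Sigma([u])\Psi_\Sigma([v])$ automatically coincide except possibly in the $(1,3)$-slot: the diagonal entries are all $1$, and the $(1,2)$- and $(2,3)$-entries on both sides equal $\alpha_u+\alpha_v$ and $\beta_u+\beta_v$ respectively (the $(1,2)$-entry of the product is $\alpha_u\cdot 1+1\cdot\alpha_v$, and similarly for $(2,3)$), exactly as in the additive behaviour noted in Remark~\ref{RemarkPower}. So the equality $\Psi_\Sigma([uv])=\Psi_\Sigma([u])\Psi_\Sigma([v])$ holds if and only if the two $(1,3)$-entries agree. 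The $(1,3)$-entry of the product is
\begin{equation*}
\frac{\alpha_u\beta_u}{2}\cdot 1 + \alpha_u\cdot\beta_v + 1\cdot\frac{\alpha_v\beta_v}{2}
=\frac{\alpha_u\beta_u+\alpha_v\beta_v}{2}+\alpha_u\beta_v,
\end{equation*}
whereas the $(1,3)$-entry of $\Psi_\Sigma([uv])$ is
\begin{equation*}
\frac{(\alpha_u+\alpha_v)(\beta_u+\beta_v)}{2}
=\frac{\alpha_u\beta_u+\alpha_v\beta_v}{2}+\frac{\alpha_u\beta_v+\alpha_v\beta_u}{2}.
\end{equation*}

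Finally I would subtract: the two expressions are equal precisely when $\alpha_u\beta_v=\tfrac12(\alpha_u\beta_v+\alpha_v\beta_u)$, i.e.\ when $\alpha_u\beta_v=\alpha_v\beta_u$, which is the claimed condition $|u|_a|v|_b=|v|_a|u|_b$. There is no real obstacle here—the statement is essentially a one-line consequence of Theorem~\ref{BinaryEntries}; the only thing requiring a small amount of care is checking that the off-$(1,3)$ entries genuinely match on both sides, so that the single scalar comparison is both necessary and sufficient. I would present the matrix product computation compactly in a single display to keep the argument short.
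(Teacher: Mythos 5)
Your proposal is correct and follows essentially the same route as the paper's own proof: both apply Theorem~\ref{BinaryEntries} to all three circular words, observe that the matrices agree everywhere except possibly in the top-right entry, and reduce equality of the $(1,3)$-entries to $|u|_a|v|_b=|v|_a|u|_b$. No gaps.
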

\begin{proof} 
By Theorem~\ref{BinaryEntries}, we have 
$$\Psi_{\Sigma}([uv])=\begin{pmatrix}
1 & |uv|_a & \dfrac{|uv|_a|uv|_b}{2}\\
0 & 1     & |uv|_b\\
0 & 0     & 1
\end{pmatrix}$$ and
$$\Psi_{\Sigma}([u])\Psi_{\Sigma}([v])=\begin{pmatrix}
1 & |u|_a +|v|_a & \dfrac{|u|_a|u|_b+2|u|_a|v|_b+|v|_a|v|_b}{2}\\
0 & 1     & |u|_b+ |v|_b\\
0 & 0     & 1
\end{pmatrix}.$$
It remains to note that the equality of the top right entries of both matrices reduces to $|u|_a|v|_b=|v|_a|u|_b$. Thus our conclusion holds.
\end{proof}

When $|u|_a|v|_b=|v|_a|u|_b$, the words $u$ and $v$ are said to satisfy a \emph{weak ratio property} \cite{MS12}. It turns out that two Parikh matrices of binary circular words commute if and only if the representatives of the circular words satisfy a weak ratio property as well. The similar proof is omitted.

\begin{corollary}
Suppose $\Sigma=\{a<b\}$ and $u,v\in \displaystyle{\Sigma^\ast}$. We have $$\Psi_{\Sigma}([u])\Psi_{\Sigma}([v])=\Psi_{\Sigma}([v])\Psi_{\Sigma}([u])$$ if and only if  $|u|_a|v|_b=|v|_a|u|_b$.
\end{corollary}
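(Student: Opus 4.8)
The plan is to mimic the proof of Corollary~\ref{ConditionBinary} almost verbatim, since the two statements share the same characterizing condition. First I would invoke Theorem~\ref{BinaryEntries} to write down $\Psi_{\Sigma}([u])$ and $\Psi_{\Sigma}([v])$ explicitly as $3\times 3$ upper triangular matrices whose entries are $1$, $|u|_a$, $\tfrac{|u|_a|u|_b}{2}$ (and analogously for $v$). Then I would compute the two products $\Psi_{\Sigma}([u])\Psi_{\Sigma}([v])$ and $\Psi_{\Sigma}([v])\Psi_{\Sigma}([u])$ directly.

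The key point is that for these unipotent upper triangular $3\times 3$ matrices the $(1,2)$- and $(2,3)$-entries of a product are just the sums of the corresponding entries of the factors, so they are automatically symmetric in $u$ and $v$; only the $(1,3)$-entry can differ. A direct multiplication gives that the $(1,3)$-entry of $\Psi_{\Sigma}([u])\Psi_{\Sigma}([v])$ is $\tfrac{|u|_a|u|_b}{2}+|u|_a|v|_b+\tfrac{|v|_a|v|_b}{2}$, while the $(1,3)$-entry of $\Psi_{\Sigma}([v])\Psi_{\Sigma}([u])$ is $\tfrac{|v|_a|v|_b}{2}+|v|_a|u|_b+\tfrac{|u|_a|u|_b}{2}$. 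Hence the two products agree if and only if $|u|_a|v|_b=|v|_a|u|_b$, which is exactly the weak ratio property. Since the paper explicitly says ``The similar proof is omitted,'' I would keep this extremely brief, essentially pointing to Corollary~\ref{ConditionBinary} and noting that the difference of the relevant $(1,3)$-entries is $|u|_a|v|_b-|v|_a|u|_b$.

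There is no real obstacle here — the statement is routine once Theorem~\ref{BinaryEntries} is in hand — so the ``hard part'' is merely bookkeeping: making sure the cross terms in the two matrix products are recorded correctly and that the factor of $\tfrac12$ is handled consistently. Concretely I would present it as follows.

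\begin{proof}
By Theorem~\ref{BinaryEntries},
$$\Psi_{\Sigma}([u])\Psi_{\Sigma}([v])=\begin{pmatrix}
1 & |u|_a+|v|_a & \dfrac{|u|_a|u|_b+2|u|_a|v|_b+|v|_a|v|_b}{2}\\
0 & 1 & |u|_b+|v|_b\\
0 & 0 & 1
\end{pmatrix}$$
and, by symmetry,
$$\Psi_{\Sigma}([v])\Psi_{\Sigma}([u])=\begin{pmatrix}
1 & |u|_a+|v|_a & \dfrac{|v|_a|v|_b+2|v|_a|u|_b+|u|_a|u|_b}{2}\\
0 & 1 & |u|_b+|v|_b\\
0 & 0 & 1
\end{pmatrix}.$$
The two matrices agree in every entry except possibly the top right one, and the difference of the top right entries is $|u|_a|v|_b-|v|_a|u|_b$. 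Hence the two products are equal if and only if $|u|_a|v|_b=|v|_a|u|_b$.
\end{proof}
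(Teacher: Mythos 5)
Your proposal is correct and follows essentially the same route as the paper, which omits the proof precisely because it is the same direct computation as in Corollary~\ref{ConditionBinary}: apply Theorem~\ref{BinaryEntries}, multiply the two $3\times 3$ matrices, and observe that only the $(1,3)$-entries can differ, their difference being $|u|_a|v|_b-|v|_a|u|_b$. Your bookkeeping of the cross terms and the factor of $\tfrac12$ is accurate, so nothing further is needed.
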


\section{Matrix-equivalence of Circular Words}

As in the case of linear words, the Parikh matrix defined in Definition~\ref{DefParikhCircular} does not necessarily characterize a circular word uniquely. For example, with respect to $\Sigma=\{a<b\}$, we have $\Psi_{\Sigma}([abab])=\begin{pmatrix}
1 &2 &2\\
0 &1 &2\\
0 &0 &1
\end{pmatrix}=\Psi_{\Sigma}([bbaa])$. Thus, we define the following notion, using the terminology for the case of linear words.

\begin{definition}
Suppose $\Sigma$ is an ordered alphabet and $w,w'\in \displaystyle{\Sigma^\ast}$. The circular words $[w]$ and  $[w']$ \textit{are matrix equivalent (or simply $M$\!-equivalent)}, denoted by $[w]\equiv_M[w']$, if and only if $\Psi_{\Sigma}([w])=\Psi_{\Sigma}([w'])$.
\end{definition}

The following question then naturally follows. 
\begin{question}
Suppose $\Sigma$ is an ordered alphabet and  $w,w'\in \displaystyle{\Sigma^\ast}$  with $w \not\sim_c w'$. When are $[w]$ and $[w']$ $M$\!-equivalent? 
\end{question}

For the binary alphabet, the answer to the above question is simple due to Theorem~\ref{BinaryEntries}---two distinct circular words $[w]$ and $[w']$ share the same Parikh matrix if and only if $\Psi(w)=\Psi(w')$ (i.e. they have the same Parikh vector). However, we haven't been able to obtain a complete characterization for the ternary alphabet. 

In an attempt to characterize linear words having the same Parikh matrix, two elementary rewriting rules were introduced in \cite{AAP08}---these rules and their generalization have then been further studied in the literature. The following version is formulated exclusively for the ternary alphabet. Suppose $\Sigma=\{a<b<c\}$ and $w,w'\in \displaystyle{\Sigma^\ast}$.

\vspace{0.4em}\begin{itemize}[leftmargin=1.78cm]
\item[Rule $E_1$.] If $w=xacy$ and $w'=xcay$ for some $x,y\in \displaystyle{\Sigma^\ast}$, then $\Psi_{\Sigma}(w)=\Psi_{\Sigma}(w')$.
\item[Rule $E_2$.] If $w=x\alpha byb\alpha z$ and $w'=xb\alpha y\alpha bz$ for some $\alpha\in\{a,c\}$, $x,z\in \displaystyle{\Sigma^\ast}$ and $y\in \displaystyle{ \{\alpha,b\}^\ast }$, then $\Psi_{\Sigma}(w)=\Psi_{\Sigma}(w')$.
\end{itemize}

The above rules, however, when applied on circular words, do not preserve \mbox{$M$\!-equivalence} in general. For example,
\begin{center}
\begin{tikzpicture}[>=stealth',semithick,auto,]
[scale=.8,auto=left,every node/.style={}]
\node at (0,0) {$\boldsymbol{a}$};
\node at (-0.35,-0.55) {$b$};
\node at (0.35,-0.6) {$\boldsymbol{c}$};

\node (c1) at (0.7,-0.3) {};
\node (c2) at (2.3,-0.3) {};

\node at (3,0) {$\boldsymbol{c}$};
\node at (2.60,-0.55) {$b$};
\node at (3.35,-0.6) {$\boldsymbol{a}$};

\path[->](c1) edge[] node {{\tiny Rule $E_1$}} (c2);
\end{tikzpicture}
\end{center}
but $|[acb]|_{ab}=\dfrac{1}{3}\neq\dfrac{2}{3}=|[cab]|_{ab}$. A simple counterexample for Rule $E_2$ is the pair of circular words $[abbac]$ and $[baabc]$ as $|[abbac]|_{abc}=\dfrac{2}{5}\neq 1=|[baabc]|_{abc}$.

We now develop some natural $M$\!-equivalence preserving rewriting rules for ternary circular words, analogous to Rule $E_1$ and Rule $E_2$. For this purpose, we need the following technical lemma.  Recall that $\SP_{\Sigma}$ is the set of slender Parikh words over $\Sigma$, as defined in Definition~\ref{010921d}.

\begin{lemma}\label{LemRuleCE2}
 Let $\Sigma=\{a,b,c\}$ and suppose that $w=x\alpha\beta y\beta\alpha$ and $w'=x\beta\alpha y\alpha\beta$ for some distinct $\alpha,\beta\in\Sigma$ and $x,y\in \displaystyle{\Sigma^\ast}$. Then, $$|[w]|_u=|[w']|_u \text{ for all } u\in \SP_{\Sigma} \text{ with } |u|\leq 2.$$
\end{lemma}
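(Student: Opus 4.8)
The plan is to prove the statement by considering the slender Parikh words $u$ of length at most $2$ one at a time. For $|u| \le 1$, there is nothing to show: by Remark~\ref{RemSingleLet}, $|[w]|_u = |w|_u$, and since $w$ and $w'$ are permutations of each other (they contain the same multiset of letters), we have $|w|_u = |w'|_u$, and also $|[w]|_\lambda = 1 = |[w']|_\lambda$. So the content is in the length-$2$ case, where $u$ ranges over $ab$, $ac$, $bc$ as well as $ba$, $ca$, $cb$ (all six ordered pairs of distinct letters, since $\SP_\Sigma$ consists of words with each letter appearing at most once and here $\Sigma$ is \emph{not} ordered so we must handle every such $u$).

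The key observation is that for any circular word and any letters $p \ne q$, there is a conservation identity $|[w]|_{pq} + |[w]|_{qp} = |[w]|_p \cdot |[w]|_q = |w|_p |w|_q$; this follows from Theorem~\ref{CombPropTerWord} applied to the two-letter sub-alphabet, or directly from Proposition~\ref{CombPropLinWord} averaged over the conjugacy class. Since $|w|_p|w|_q = |w'|_p|w'|_q$, it suffices to show $|[w]|_{pq} = |[w']|_{pq}$ for just one representative of each unordered pair $\{p,q\}$; the reversed count then agrees automatically. So I would reduce to showing $|[w]|_{pq} = |[w']|_{pq}$ for $pq \in \{ab, ac, bc\}$ (or any chosen representatives).

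For the main computation, I would use the cyclic-shift formulation from Remark~\ref{AltDef}: $|[w]|_{pq} = \frac{1}{|w|}\sum_{i=0}^{|w|-1} |w_i|_{pq}$ over all $|w|$ cyclic shifts (note $|w| = |w'|$). Now I compare $w = x\alpha\beta y\beta\alpha$ and $w' = x\beta\alpha y\alpha\beta$ position by position. The crucial structural fact is that $w$ and $w'$ occupy the \emph{same} circular positions: both have length $m := |x| + |y| + 4$, and if we label positions around the circle, $w$ and $w'$ agree except that the pair of adjacent positions right after $x$ carries $\alpha\beta$ versus $\beta\alpha$, and the pair of adjacent positions at the very end carries $\beta\alpha$ versus $\alpha\beta$. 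So as multisets on the circle, the letter in each position is the same except that two specific adjacent-pairs of positions have their two letters swapped. The count $|[w]|_{pq}$ can be written as a sum over all ordered pairs of distinct circular positions $(s,t)$ of a weight $d(s,t)$ (the number of rotations in which position $s$ precedes position $t$ in the linear order) times the indicator that position $s$ holds $p$ and position $t$ holds $q$ — and $d(s,t)$ depends only on the positions $s,t$, not on the letters. Therefore $|[w]|_{pq} - |[w']|_{pq}$ is a sum of finitely many terms coming only from position-pairs involving one of the two swapped adjacent pairs, and the two local swaps contribute with opposite signs because of the $\alpha\beta$-vs-$\beta\alpha$ / $\beta\alpha$-vs-$\alpha\beta$ symmetry. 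Making this cancellation precise is the main obstacle: I would carefully write $d(s,t)$ as an explicit function (essentially $m + s - t$ or $s - t$ depending on order, as in the proof of Theorem~\ref{BinaryEntries}), isolate the contribution of the block $\{$two positions after $x\}$ and the block $\{$last two positions$\}$, and check that swapping $\alpha,\beta$ within a pair of \emph{consecutive} positions changes $|[w]|_{pq}$ by an amount that depends only on which of $\{\alpha,\beta\}$ equals $p$ and which equals $q$, and on the distance between the two blocks around the circle — and that this change is the negative of the change from the second swap because the two blocks are separated by the same arc in both directions (the arc $y$-then-wrap on one side, $x$-then-wrap on the other), so the distance function contributes symmetrically. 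A cleaner route to the same end, which I would pursue first, is: handle $u = bc$ (say $\{\alpha,\beta\} = \{a, \cdot\}$) by noting it only involves positions holding $b$ or $c$; if neither $\alpha$ nor $\beta$ is in $\{b,c\}$ the swap does nothing to the relevant positions so $|[w]|_{bc} = |[w']|_{bc}$ trivially; otherwise exactly one of $\alpha,\beta$ is (say) $b$ and the other is $a$, $c$, or $b$ — case-analyse on $\{\alpha,\beta\} \cap \{p,q\}$, which has at most a few cases, and in each the local swap either doesn't touch a $p$ or $q$ (done) or swaps a $p$-position with its neighbour (and then the contribution difference is a single clean term that is cancelled by the mirror swap at the other end). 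I expect the whole argument to come down to the identity that, for two adjacent circular positions, the difference in subword count caused by transposing their letters is $\pm 1$ times a quantity symmetric under reflecting the circle, and the two transpositions in $w \to w'$ are mirror images of each other; establishing this symmetry crisply is where the real work lies.
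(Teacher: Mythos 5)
Your proposal is correct in substance but takes a genuinely different route from the paper's proof. The paper first reduces, by interchanging the roles of $w$ and $w'$ (together with the identity $|v|_{pq}+|v|_{qp}=|v|_p|v|_q$ that you also use), to the two cases $u=\alpha\beta$ and $u=\beta\gamma$; then, for every cyclic shift other than the two that split a swapped block (the shifts indexed $|x|+1$ and $n-1$), it observes that $w_i$ and $w'_i$ have equal counts because their projections onto the relevant two-letter alphabet are either identical or related by Rule $E_2$; finally it computes the counts at the two exceptional shifts explicitly and checks that the discrepancies cancel. You instead keep all three unordered pairs $\{p,q\}$ and recast $\sum_i |w_i|_{pq}$ as a sum over ordered pairs of circular positions weighted by the function $d(s,t)$ from the proof of Theorem~\ref{BinaryEntries}, so that passing from $[w]$ to $[w']$ becomes two adjacent transpositions of letters. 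This works, and the cancellation you flag as the main obstacle is in fact cleaner than you anticipate: since $d(k,r)-d(k+1,r)=-1$ and $d(r,k)-d(r,k+1)=+1$ for \emph{every} position $r$ outside the block $\{k,k+1\}$, the change in $\sum_i|w_i|_{pq}$ caused by replacing $(\mu,\nu)$ by $(\nu,\mu)$ at two adjacent positions depends only on $n$, the ordered pair $(\mu,\nu)$ and the Parikh vector of the word, not on where the block sits on the circle; moreover this change is visibly antisymmetric under exchanging $\mu$ and $\nu$. Hence the swap $(\alpha,\beta)\mapsto(\beta,\alpha)$ after $x$ and the swap $(\beta,\alpha)\mapsto(\alpha,\beta)$ at the end contribute exactly opposite amounts, with no appeal to the ``mirror arc'' symmetry you invoke. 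Your approach buys a conceptual explanation (any pair of inverse adjacent transpositions preserves these counts, wherever they occur), while the paper's is more computational but needs no machinery beyond Remark~\ref{AltDef}. What remains for you is to actually carry out the short computation of a transposition's effect; as written your argument is a sound plan rather than a finished proof.
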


\begin{proof}
	Clearly, $|[w]|_u=|w|_u=|w'|_u=|[w']|_u$ for all $u\in\{a,b,c\}$. 
	Hence, suppose $u\in \SP_{\Sigma}$ and $|u|=2$.	
	Let $n=|w|=|w'|$. By Remark~\ref{AltDef}, it suffices to show that
	$\sum_{i=0}^{n-1}\vert w_i\vert_u=\sum_{i=0}^{n-1} \vert w'_i\vert_u$, where $w_i$ and $w'_i$ are the $i$-th cyclic shifts of $w$ and $w'$ respectively. Since $\alpha$ and $\beta$ are distinct and we can interchange $w$ and $w'$ if necessary, it suffices to consider the cases where $u=\alpha\beta$ and $u=\beta\gamma$, where $\gamma\in \Sigma\backslash \{\alpha, \beta\}$. 
Before that, note that $w_{\vert x\vert+1}= \beta y\beta\alpha x \alpha$, $w'_{\vert x\vert+1}= \alpha y\alpha\beta x \beta$, $w_{n-1}=\alpha x\alpha\beta y\beta$, and 
$w'_{n-1}=\beta x\beta\alpha y\alpha$.

\begin{case}$u=\alpha\beta$.

Notice that for every integer $i\in\{0,1,\ldots ,n-1\}\backslash\{|x|+1,n-1\}$, the word $\pi_{\{\alpha,\beta\}}(w_i)$ can be rewritten into $\pi_{\{\alpha,\beta\}}(w'_i)$ by an application of Rule $E_2$, thus $|w_i|_u=|\pi_{\{\alpha,\beta\}}(w_i)|_{\alpha\beta}=|\pi_{\{\alpha,\beta\}}(w'_i)|_{\alpha\beta}=|w'_i|_u$. In the other cases, we have
	\begin{flalign*}
		|w_{|x|+1}|_{u}=|\beta y\beta\alpha x \alpha|_{\alpha\beta}&=|y|_{\alpha\beta}+|y|_\alpha(1+|x|_\beta)+|x|_\beta+|x|_{\alpha\beta},\\
		|w_{n-1}|_{u}=|\alpha x\alpha\beta y\beta|_{\alpha\beta}&=|x|_\beta+|y|_\beta+2+|x|_{\alpha\beta}+|x|_\alpha(2+|y|_\beta)+|y|_\beta+2+|y|_{\alpha\beta}+|y|_\alpha,\\
		|w'_{|x|+1}|_{u}=|\alpha y\alpha\beta x \beta|_{\alpha\beta}&=|y|_\beta+|x|_\beta+2+|y|_{\alpha\beta}+|y|_\alpha(2+|x|_\beta)+ |x|_{\beta}+2+ |x|_{\alpha\beta}+|x|_\alpha,\\
	|w'_{n-1}|_{u}=|\beta x\beta\alpha y\alpha|_{\alpha\beta}&=|x|_{\alpha\beta}+|x|_\alpha (1+|y|_\beta)+|y|_\beta+|y|_{\alpha\beta}.
	&&
	\end{flalign*}
\end{case}
	
\begin{case}$u=\beta\gamma$, where $\gamma\in \Sigma\backslash \{\alpha, \beta\}$.
	
	Notice that for every integer $i\in\{0,1,\ldots ,n-1\}\backslash\{|x|+1,n-1\}$, we have $\pi_{\{\beta, \gamma\}}(w_i)=\pi_{\{\beta, \gamma\}}(w'_i)$, thus $|w_i|_u=|\pi_{\{\beta,\gamma\}}(w_i)|_{\beta\gamma}=|\pi_{\{\beta, \gamma\}}(w'_i)|_{\beta\gamma}=|w'_i|_u$. Meanwhile, we have
	\begin{flalign*}
	|w_{|x|+1}|_{u}=|\beta y\beta\alpha x\alpha|_{\beta\gamma}&=|y|_\gamma+|x|_\gamma+|y|_{\beta\gamma}+|y|_\beta|x|_\gamma+|x|_\gamma+|x|_{\beta\gamma},\\
	|w_{n-1}|_{u}=  |\alpha x \alpha \beta y\beta|_{\beta\gamma}&= |x|_{\beta\gamma}+ |x|_\beta|y|_\gamma+|y|_\gamma+|y|_{\beta\gamma},\\
	|w'_{|x|+1}|_{u}=|\alpha y\alpha \beta x\beta|_{\beta\gamma}&=|y|_{\beta\gamma}+ |y|_\beta|x|_\gamma+|x|_\gamma+|x|_{\beta\gamma},\\
	|w'_{n-1}|_{u}=|\beta x\beta\alpha y\alpha|_{\beta\gamma}&=|x|_\gamma+|y|_\gamma+ |x|_{\beta\gamma}+ |x|_\beta|y|_\gamma+|y|_\gamma+|y|_{\beta\gamma}.
	&&
\end{flalign*}
\end{case}	
In either case, it can be verified that $|w_{|x|+1}|_{u}+|w_{n-1}|_{u}=|w'_{|x|+1}|_{u}+|w'_{n-1}|_{u}$ and thus
$\sum_{i=0}^{n-1}\vert w_i\vert_u=\sum_{i=0}^{n-1} \vert w'_i\vert_u$ as required.	
\end{proof}

\begin{theorem}\label{TheoRuleCE1}
 Let $\Sigma=\{a<b<c\}$. Suppose that $w=xacyca$ and $w'=xcayac$  for some $x,y\in \displaystyle{\Sigma^\ast}$. We have $[w]\equiv_M[w']$ if and only if $|y|_b(|x|_a-|x|_c) = |x|_b(|y|_a-|y|_c)$.
\end{theorem}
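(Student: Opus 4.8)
Write $n=|w|=|w'|$ and let $w_i, w_i'$ denote the $i$-th cyclic shifts as in Remark~\ref{AltDef}, so that $\Psi_\Sigma([w])=\frac{1}{n}\sum_{i=0}^{n-1}\Psi_\Sigma(w_i)$ and similarly for $w'$. By Theorem~\ref{TheoEntryParMatCirc} it suffices to compare the $(i,j+1)$-entries, i.e.\ the quantities $|[w]|_u$ and $|[w']|_u$ for $u\in\{a,b,c,ab,bc,abc\}$. The first plan is to dispose of all entries except the $(1,4)$-entry essentially for free: for $u\in\{a,b,c\}$ the counts agree trivially (Remark~\ref{RemSingleLet}), and for $u\in\{ab,bc\}$ note that $w=xacyca$ and $w'=xcayac$ have the shape $x\alpha\beta y\beta\alpha$ and $x\beta\alpha y\alpha\beta$ with $\alpha=a$, $\beta=c$, so Lemma~\ref{LemRuleCE2} applies verbatim and gives $|[w]|_{ab}=|[w']|_{ab}$ and $|[w]|_{bc}=|[w']|_{bc}$. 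Hence $[w]\equiv_M[w']$ holds if and only if $|[w]|_{abc}=|[w']|_{abc}$, and everything reduces to computing the difference $\sum_{i=0}^{n-1}\bigl(|w_i|_{abc}-|w_i'|_{abc}\bigr)$.

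\textbf{The key computation.} Following the proof of Lemma~\ref{LemRuleCE2}, observe that for every $i\in\{0,1,\dots,n-1\}\setminus\{|x|+1,n-1\}$ the word $w_i$ is obtained from $w_i'$ by a single application of Rule $E_1$ (swapping an adjacent $ac$ into $ca$ or vice versa), at the site of one of the two distinguished blocks; since Rule $E_1$ preserves the linear Parikh matrix, $|w_i|_{abc}=|w_i'|_{abc}$ for all such $i$. Thus only the two shifts $i=|x|+1$ and $i=n-1$ contribute, and using the explicit forms $w_{|x|+1}=cyca\,x\,a$, $w_{n-1}=a\,x\,ac\,y\,c$, $w'_{|x|+1}=ay ac\,x\,c$, $w'_{n-1}=c\,x\,ca\,y\,a$ (obtained by cyclically rotating $w=xacyca$ and $w'=xcayac$), I would expand each $|\cdot|_{abc}$ by splitting every chosen occurrence of $abc$ according to which of the explicit blocks the letters $a$, $b$, $c$ fall in. Each such count is a polynomial in the quantities $|x|_a,|x|_b,|x|_c,|x|_{ab},|x|_{ac},|x|_{bc},|x|_{abc}$ and the analogous quantities for $y$; after forming $\bigl(|w_{|x|+1}|_{abc}+|w_{n-1}|_{abc}\bigr)-\bigl(|w'_{|x|+1}|_{abc}+|w'_{n-1}|_{abc}\bigr)$, the bulk of the monomials should cancel, leaving a short expression proportional to $|y|_b(|x|_a-|x|_c)-|x|_b(|y|_a-|y|_c)$. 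Dividing by $n$ then yields $|[w]|_{abc}-|[w']|_{abc}=\tfrac{1}{n}\bigl(|y|_b(|x|_a-|x|_c)-|x|_b(|y|_a-|y|_c)\bigr)$ (up to an overall nonzero constant), from which the stated equivalence is immediate.

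\textbf{Main obstacle.} The routine-but-delicate part is the bookkeeping in the last step: each of the four words $|w_{|x|+1}|_{abc}$, $|w_{n-1}|_{abc}$, $|w'_{|x|+1}|_{abc}$, $|w'_{n-1}|_{abc}$ has several internal single-letter blocks ($a$, $c$, $a$ from the $acyca$ part, plus the $x$ and $y$ factors), so each count expands into on the order of a dozen terms, and one must be careful that the isolated letters $a$ and $c$ appearing outside $x$ and $y$ are tracked correctly (they can serve as the ``$a$'' or ``$c$'' of a chosen $abc$ but never as ``$b$''). I would organize this by writing, for a concatenation $w=z_1z_2\cdots z_m$, the standard expansion $|w|_{abc}=\sum |z_{i}|_{\bullet}$ over ways of distributing $a,b,c$ among the factors in order, and then grouping the $x$-contributions and $y$-contributions separately; the cancellation between the $i=|x|+1$ and $i=n-1$ shifts (and between $w$ and $w'$) is what makes the final answer clean. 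A secondary point to check is the primitivity issue: if $w$ is not primitive the shifts $w_i$ repeat, but the identity in Remark~\ref{AltDef} is stated for the average over all $n$ cyclic shifts counted with multiplicity, so no separate argument is needed — the computation above is valid regardless of primitivity.
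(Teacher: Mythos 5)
Your plan is correct and coincides with the paper's own proof: the reduction to the $(1,4)$-entry via Lemma~\ref{LemRuleCE2}, the identification of the two exceptional shifts $i=|x|+1$ and $i=n-1$, the use of Rule~$E_1$-invariance of the linear Parikh matrix for all other shifts, and the resulting difference $|y|_b(|x|_a-|x|_c)-|x|_b(|y|_a-|y|_c)$ (with denominator $n$) all match the paper exactly. The only slip is that a generic $w_i$ differs from $w_i'$ by \emph{two} applications of Rule~$E_1$ (one at each of the intact blocks $ac$ and $ca$), not one; this does not affect the argument.
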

\begin{proof}
 By Lemma~\ref{LemRuleCE2},  
$|[w]|_u=|[w']|_u \text{ for all } u\in \{a,b,c,ab,bc  \}$.	Let $n=|w|=|w'|$. By Theorem~\ref{TheoEntryParMatCirc} and Remark~\ref{AltDef}, it suffices to show that $\sum_{i=0}^{n-1} \vert w_i\vert_{abc}=\sum_{i=0}^{n-1} \vert w'_i\vert_{abc}$  if and only if $|y|_b(|x|_a-|x|_c) = |x|_b(|y|_a-|y|_c)$, where $w_i$ and $w'_i$ are the $i$-th cyclic shifts of $w$ and $w'$ respectively.

For every integer $i\in\{0,1,\ldots ,n-1\}\backslash$ $\{|x|+1,n-1\}$, observe that $w'_i$ can be obtained from $w'_i$ by two applications of Rule~$E_1$---thus by the transitivity of $M$\!-equivalence, it follows that $w_i \equiv_M w_i'$ and thus
$|w_i|_{abc}=|w'_i|_{abc}$. Meanwhile, 
	\begin{flalign*}
		|w_{|x|+1}|_{abc}=|cycaxa|_{abc}={}&|y|_{abc}+ |y|_{ab}(1+|x|_c)+ |y|_a|x|_{bc}+ |x|_{bc} +|x|_{abc},\\		
		|w'_{|x|+1}|_{abc}=|ayacxc|_{abc}={}&|y|_{bc}+|y|_b(2+|x|_c)+|x|_{bc}+|x|_b+ |y|_{abc}+|y|_{ab}(2+|x|_c) \\
		&+ |y|_a(|x|_{bc}+|x|_b) +|x|_{bc}+|x|_b+|x|_{abc} +|x|_{ab}.
	\end{flalign*}

	Similarly,
	\begin{flalign*}
		|w_{n-1}|_{abc}=|axacyc|_{abc}={}& |x|_{bc}+ |x|_b(2+|y|_c)+|y|_{bc}+|y|_b +|x|_{abc} +|x|_{ab}(2+|y|_c)\\
		& +|x|_a(|y|_{bc}+|y|_{b}) +|y|_{bc} +|y|_b +|y|_{abc}+|y|_{ab}  \\				
		|w'_{n-1}|_{abc}=|cxcaya|_{abc}={}&|x|_{abc}+ |x|_{ab}(1+|y|_c)+|x|_a|y|_{bc}+|y|_{bc} +|y|_{abc}.
	\end{flalign*}

	From the above values, it can be verified carefully that 
	\begin{flalign*}
		\sum_{i=0}^{n-1}|w_i|_{abc}-\sum_{i=0}^{n-1} |w'_i|_{abc}
			&=|w_{|x|+1}|_{abc}- |w'_{|x|+1}|_{abc}+ |w_{n-1}|_{abc}- |w'_{n-1}|_{abc}\\
		&=|x|_a|y|_b+|x|_b|y|_c - |y|_a|x|_b-|y|_b|x|_c.
	\end{flalign*}
Hence, $\sum_{i=0}^{n-1}|w_i|_{abc}=\sum_{i=0}^{n-1} |w'_i|_{abc}$ if and only if $ |y|_b(|x|_a-|x|_c)= |x|_b(|y|_a-|y|_c)$.
\end{proof}

Theorem~\ref{TheoRuleCE1} is the basis of our analogue of Rule $E_1$ for circular ternary words.
In order to prove the corresponding theorem for Rule $E_2$, we need another simple technical lemma, on top of Lemma~\ref{LemRuleCE2}.

\begin{lemma}\cite[Lemma 4]{aS10}\label{LemCounter}
 Let $\Sigma=\{a,b,c\}$ and suppose that $w=x\alpha byb\alpha z$ and $w'=xb\alpha y\alpha bz$  for some $\alpha\in\{a,c\}$ and $x,y,z\in \displaystyle{\Sigma^\ast}$. Let $\overline{\alpha}\in\Sigma\backslash\{\alpha, b\}$. Then, $|w|_{abc}-|w'|_{abc}=|y|_{\overline{\alpha} }$.
\end{lemma}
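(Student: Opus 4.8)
The plan is to prove the identity $|w|_{abc}-|w'|_{abc}=|y|_{\overline{\alpha}}$ by reducing the problem to a local computation via the projective morphism. Writing $w=x\alpha b y b\alpha z$ and $w'=xb\alpha y\alpha bz$, I first observe that $w$ and $w'$ have the same Parikh vector and indeed agree on the counts of every subword of length $\le 2$; the only possible discrepancy is in $|\cdot|_{abc}$. To isolate where the difference comes from, I would classify every occurrence of $abc$ according to how many of its three chosen letters fall in the two distinguished blocks $\alpha b$ (respectively $b\alpha$) near positions $|x|+1,|x|+2$ and the two blocks $b\alpha$ (respectively $\alpha b$) near the end of the middle portion, versus how many fall in $x$, $y$, or $z$.

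The key step is to compute $|w|_{abc}$ and $|w'|_{abc}$ by decomposing each word at its four highlighted letters, expanding $|w|_{abc}$ as a sum of products of subword counts of $x$, $y$, $z$ (such as $|x|_{ab}$, $|y|_{bc}$, $|z|_c$, and single-letter counts) together with the contributions of the fixed letters $\alpha,b,b,\alpha$. Because $\alpha\in\{a,c\}$ there are really two symmetric instances ($\alpha=a$ and $\alpha=c$), and it is cleanest to treat $\alpha=a$ first and then note the $\alpha=c$ case follows by the reversal/relabelling symmetry. After expanding both $|w|_{abc}$ and $|w'|_{abc}$ fully, the bulk of the terms—every term not simultaneously using a letter from $x$, the block $b$'s, and a letter from $z$ across the swap—cancels, exactly as the two swapped blocks $\alpha b\leftrightarrow b\alpha$ leave the pairwise counts $|\cdot|_{ab},|\cdot|_{bc},|\cdot|_{ac}$ invariant. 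The surviving difference should collapse to a single term counting occurrences of the third letter $\overline\alpha$ inside $y$, giving $|y|_{\overline\alpha}$.

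I would expect the main obstacle to be bookkeeping: the careful enumeration of which $(a,b,c)$ triples are affected by the swap and verifying that all but the $|y|_{\overline\alpha}$ contribution cancel. The conceptual reason for the answer is that swapping $\alpha b$ with $b\alpha$ at two mirror-image sites changes an $abc$ occurrence precisely when its middle letter $b$ is one of the moved $b$'s, its first-or-last letter is one of the moved $\alpha$'s, and its remaining letter is a matching $\overline\alpha$ lying in the central block $y$; the count of such configurations is governed solely by $|y|_{\overline\alpha}$. Since this is a statement about linear words only (not circular ones), no cyclic-shift averaging is needed here, and the result is quoted from \cite{aS10}; consequently I would keep the computation terse, verifying the two blocks' swap invariance on pairs and then extracting the lone residual term $|y|_{\overline\alpha}$.
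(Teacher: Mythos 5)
The paper does not prove this statement: it is imported verbatim as Lemma~4 of \cite{aS10}, so there is no in-house argument to measure your proposal against, and I can only judge it on its own terms. Your overall strategy is viable --- decomposing $w=x\alpha byb\alpha z$ and $w'=xb\alpha y\alpha bz$ at the four distinguished letters, expanding $\vert\cdot\vert_{abc}$ over the concatenation, and cancelling does leave exactly $\vert y\vert_{\overline{\alpha}}$, and reducing the case $\alpha=c$ to $\alpha=a$ via reversal composed with the relabelling $a\leftrightarrow c$ is legitimate. Two criticisms, though. First, what you have written is a plan rather than a proof: the ``bookkeeping'' you defer is the entire content of the lemma, and nothing in the proposal certifies that the residue is $\vert y\vert_{\overline{\alpha}}$ rather than, say, something also involving $x$ and $z$. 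Second, your description of the cancellation is internally inconsistent: you assert that the surviving terms are those ``simultaneously using a letter from $x$, the block $b$'s, and a letter from $z$,'' yet your own (correct) conclusion is that the survivor is $\vert y\vert_{\overline{\alpha}}$, which uses a letter of $y$ and none of $x$ or $z$; as stated, your criterion would force the difference to vanish whenever $x=z=\lambda$, contradicting the lemma.

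If you do want to write the argument out, there is a route that eliminates the bookkeeping. Since $\Psi_\Sigma$ is a morphism,
$\Psi_\Sigma(w)-\Psi_\Sigma(w')=\Psi_\Sigma(x)\bigl(\Psi_\Sigma(\alpha b)\Psi_\Sigma(y)\Psi_\Sigma(b\alpha)-\Psi_\Sigma(b\alpha)\Psi_\Sigma(y)\Psi_\Sigma(\alpha b)\bigr)\Psi_\Sigma(z)$.
The matrices $\Psi_\Sigma(\alpha b)$ and $\Psi_\Sigma(b\alpha)$ differ in a single second-diagonal entry, i.e.\ by one elementary matrix, so the bracket is a short commutator-type computation that evaluates to $\vert y\vert_{\overline{\alpha}}\,E$, where $E$ is the matrix whose only nonzero entry is a $1$ in position $(1,4)$. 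Finally $\Psi_\Sigma(x)\,E\,\Psi_\Sigma(z)=E$ because the first column of $\Psi_\Sigma(x)$ is $(1,0,0,0)^{T}$ and the last row of $\Psi_\Sigma(z)$ is $(0,0,0,1)$; this makes transparent why the letters of $x$ and $z$ drop out and only $\vert y\vert_{\overline{\alpha}}$ survives in the $(1,4)$-entry.
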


\begin{theorem}\label{TheoBackBoneCE2}
 Let $\Sigma=\{a<b<c\}$ and suppose that $w=x\alpha byb\alpha$ and $w'=xb\alpha y\alpha b$  for some $\alpha\in\{a,c\}$ and $x,y\in \displaystyle{\Sigma^\ast}$. 
	We have $[w]\equiv_M[w']$ if and only if $|x|_{\overline{\alpha}}(|y|+|y|_b+3)=|y|_{\overline{\alpha}}(|x|+|x|_b+3)$, where $\overline{\alpha}\in\Sigma\backslash\{\alpha, b\}$.
\end{theorem}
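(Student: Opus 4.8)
The plan is to mirror the proof of Theorem~\ref{TheoRuleCE1} as closely as possible, using the same machinery. First I would invoke Lemma~\ref{LemRuleCE2} with the pair $\alpha,b$ (in the role of the $\alpha,\beta$ there, noting $w=x\alpha byb\alpha$ and $w'=xb\alpha y\alpha b$ are exactly of the form $x'\alpha'\beta' y'\beta'\alpha'$ and $x'\beta'\alpha' y'\alpha'\beta'$) to conclude $|[w]|_u=|[w']|_u$ for all $u\in\{a,b,c,ab,bc\}$. By Theorem~\ref{TheoEntryParMatCirc} and Remark~\ref{AltDef}, the only remaining entry to control is the $abc$-count, so $[w]\equiv_M[w']$ reduces to showing $\sum_{i=0}^{n-1}|w_i|_{abc}=\sum_{i=0}^{n-1}|w'_i|_{abc}$, where $n=|w|$ and $w_i,w'_i$ are the $i$-th cyclic shifts.

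Next I would identify which cyclic shifts contribute a nonzero difference. Writing $w=x\alpha byb\alpha$ with $|x|=m$, the cyclic shift $w_i$ for most $i$ has the property that $w_i$ and $w'_i$ are again related by a single application of Rule~$E_2$ (the factorization $x'\alpha b y' b\alpha z'$ versus $x'b\alpha y'\alpha b z'$ survives cyclic rotation as long as the rotation point does not fall inside or immediately adjacent to one of the two swapped ``$\alpha b$''/``$b\alpha$'' blocks, and as long as the intervening factor $y'$ stays over $\{\alpha,b\}$ — which, crucially, it need not in general, but Rule~$E_2$'s hypothesis on $y$ is only needed for the linear identity, while here we can instead apply Lemma~\ref{LemCounter} directly to each shift). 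So the cleaner route: for each $i$, apply Lemma~\ref{LemCounter} to the pair $(w_i,w'_i)$ whenever both are genuinely of the form $x'\alpha byb\alpha z'$ and $x'b\alpha y\alpha bz'$ after rotation, obtaining $|w_i|_{abc}-|w'_i|_{abc}=|y^{(i)}|_{\overline\alpha}$ for the appropriate middle factor $y^{(i)}$; for the exceptional rotations (where the rotation cuts through one of the swapped blocks — there are boundedly many, essentially the rotations landing between the two $\alpha$'s or the two $b$'s at the ``seam'') compute $|w_i|_{abc}$ and $|w'_i|_{abc}$ by hand as in Theorem~\ref{TheoRuleCE1}. Summing, $\sum_i(|w_i|_{abc}-|w'_i|_{abc})$ collapses to an expression in $|x|,|x|_b,|x|_{\overline\alpha},|y|,|y|_b,|y|_{\overline\alpha}$, and I expect it to simplify to $|x|_{\overline\alpha}(|y|+|y|_b+3)-|y|_{\overline\alpha}(|x|+|x|_b+3)$, possibly up to reorganizing via $|y|=|y|_\alpha+|y|_b+|y|_{\overline\alpha}$. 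Setting this difference to zero gives the stated criterion.

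The main obstacle will be the bookkeeping in the exceptional shifts: unlike Theorem~\ref{TheoRuleCE1}, where only the two shifts $i=|x|+1$ and $i=n-1$ needed explicit treatment, here the swapped pattern $\alpha b y b\alpha$ has four ``active'' letters at its ends rather than two, so the seam between $w$'s suffix and prefix can be cut in a few more places, and one must carefully enumerate exactly which rotations fail to be Rule-$E_2$-related and tabulate their $abc$-counts correctly — the factor $y$ appearing in the middle also changes shape under rotation (it becomes a conjugate-type factor $bz'\cdot x'\alpha$ or similar), so Lemma~\ref{LemCounter} must be applied with the right substitution each time, and the term counting the $\overline\alpha$'s picked up across the seam is where sign and off-by-one errors are most likely. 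A secondary check is to confirm the ``$+3$'' constant: it should come from the three ``new'' letters $\alpha,b$ (and one more from the wrap-around block) that each exceptional seam contributes, so I would verify the constant by testing the identity on a short example such as the pair $[abbac]$ versus $[baabc]$ already mentioned in the text (with $\alpha=a$, $\overline\alpha=c$, $x=\lambda$, $y=b$, $z=\lambda$ — wait, there $z\ne\lambda$, so instead use the genuinely $z=\lambda$ instance), which should fail the criterion exactly when the $abc$-counts differ.
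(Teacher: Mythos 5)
Your proposal is correct and follows essentially the same route as the paper's proof: reduce to the $abc$-entry via Lemma~\ref{LemRuleCE2} and Remark~\ref{AltDef}, apply Lemma~\ref{LemCounter} shift-by-shift (with middle factor $y$ for the $|x|+1$ shifts preserving the original block structure and $x$ for the remaining $n-|x|-3$ shifts), and compute the two seam-splitting shifts $i=|x|+1$ and $i=n-1$ by hand before summing. Your only over-caution is anticipating more than two exceptional rotations; as in Theorem~\ref{TheoRuleCE1}, exactly the two rotations that split one of the swapped two-letter blocks require direct treatment.
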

\begin{proof}
	Without loss of generality, assume $\alpha=a$ as the other case is similar. Hence, $w=xabyba$ and $w'=xba ya b$.
	 By Lemma~\ref{LemRuleCE2} and Remark~\ref{AltDef}, it suffices to show that $\sum_{i=0}^{n-1} \vert w_i\vert_{abc}=\sum_{i=0}^{n-1} \vert w'\vert_{abc}$  if and only if $|x|_{c}(|y|+|y|_b+3)=|y|_{c}(|x|+|x|_b+3)$, where $n=|w|=|w'|$ and $w_i$ and $w'_i$ are the $i$-th cyclic shifts of $w$ and $w'$, respectively.
	 
	 For every integer $0\le i\le |x|$, by Lemma~\ref{LemCounter}, we have $|w_i|_{abc}-|w'_i|_{abc}=|y|_c$. Similarly, for every integer $|x|+2\le i\le n-2$, by Lemma~\ref{LemCounter}, we have $|w_i|_{abc}-|w'_i|_{abc}=-|x|_c$. Meanwhile, we have
		\begin{flalign*}
		|w_{|x|+1}|_{abc}=|bybaxa|_{abc}={}&|y|_{abc}+|y|_{ab}|x|_c+|y|_a(|x|_c+|x|_{bc})+ |x|_{bc}+|x|_{abc},\\		
		|w'_{|x|+1}|_{abc}=|ayabxb|_{abc}={}&|y|_{bc}+ |y|_b|x|_c+|x|_c+|x|_{bc}+|y|_{abc}+|y|_{ab}|x|_c+|y|_a(|x|_c+ |x|_{bc}) \\
		& +|x|_c+|x|_{bc}+|x|_{abc}.
	\end{flalign*}
	thus $|w_{|x|+1}|_{abc}-|w'_{|x|+1}|_{abc}=-|x|_{bc}-|y|_{bc}-2|x|_c-|y|_b|x|_c$. 
	
	Similarly,
	\begin{flalign*}
		|w_{n-1}|_{abc}=|axabyb|_{abc}&=|x|_{bc}+|x|_b|y|_c+|y|_c+|y|_{bc}+|x|_{abc} +|x|_{ab}|y|_c  +  |x|_a(|y|_c+|y|_{bc})\\
		&\,\,\,\,\,\,+|y|_c+|y|_{bc}+|y|_{abc},\\  		
		|w'_{n-1}|_{abc}=|bxbaya|_{abc}&=|x|_{abc} +|x|_{ab}|y|_c +|x|_a(|y|_c+|y|_{bc})+|y|_{bc}+|y|_{abc}.
	\end{flalign*}
	thus $|w_{n-1}|_{abc}-|w'_{n-1}|_{abc}=|x|_{bc}+|y|_{bc}+2|y|_c+|x|_b|y|_c$.
	
	From the values above, it can be verified that 
	\begin{flalign*}
			\sum_{i=0}^{n-1}|w_i|_{abc}-\sum_{i=0}^{n-1} |w'_i|_{abc}
			={}&\sum_{i=0}^{n-1} \left(|w_i|_{abc}-|w'_i|_{abc}\right)\\
		   	 ={}&(|x|+1)|y|_c + ( -|x|_{bc}-|y|_{bc}-2|x|_c-|y|_b|x|_c       )\\
		   	 &+ (n-|x|-3 )(-|x|_c)+   ( |x|_{bc}+|y|_{bc}+2|y|_c+|x|_b|y|_c   )\\
		   	     ={}&|x||y|_c+  3|y|_c+|x|_b|y|_c  -|y||x|_c-3|x|_c-|y|_b|x|_c,
	\end{flalign*}
where the last equality follows because $n= |x|+|y|+4$.
Therefore, $\sum_{i=0}^{n-1}|w_i|_{abc}=\sum_{i=0}^{n-1} |w'_i|_{abc}$ if and only if $|x|_c(|y|+|y|_b+3)=|y|_c(|x|+|x|_b+3)$.
\end{proof}

By Theorems~\ref{TheoRuleCE1} and \ref{TheoBackBoneCE2}, we state the following
$M$\!-equivalence preserving elementary rules for circular words, exclusively for the ternary alphabet. Suppose 
$\Sigma=\{a<b<c\}$ and $w,w'\in \displaystyle{\Sigma^\ast}$.

\vspace{0.4em}\begin{itemize}[leftmargin=2.10cm]
	\item[Rule $CE_1$.] If $w=xacyca$ and $w'=xcayac$ for some $x,y\in \displaystyle{\Sigma^\ast}$ such that $|y|_b(|x|_a-|x|_c) = |x|_b(|y|_a-|y|_c)$, then $[w]\equiv_M[w']$.
	\item[Rule $CE_2$.] If $w=x\alpha byb\alpha $ and $w'=xb\alpha y\alpha b$ for some $\alpha\in\{a,c\}$ and  $x,y\in \displaystyle{\Sigma^\ast}$  such that $|x|_{\overline{\alpha}}(|y|+|y|_b+3)=|y|_{\overline{\alpha}}(|x|+|x|_b+3)$  where $\overline{\alpha}\in\Sigma\backslash\{\alpha, b\}$, then $[w]\equiv_M[w']$.
\end{itemize}

If $w=xac$ and $w'=xca$ for some $x\in \displaystyle{\Sigma^\ast}$ with $\vert x\vert_b\neq 0$, then $[w]\not\equiv_M [w']$. Meanwhile, if $w=xacyca$ and $w'=xcayac$ for some $x,y\in \displaystyle{\Sigma^\ast}$, then as linear words, it holds that 
  $\Psi_{\Sigma} (w) = \Psi_{\Sigma} (xcayca)= \Psi_{\Sigma}(w')$  by two applications of Rule $E_1$. This brings us to Rule $CE_1$ as the analogue of Rule $E_1$.

The conditions on $x$ and $y$ in Rule $CE_1$ and Rule $CE_2$ are not easy to remember. The following corollary offers a simpler criterion  guaranteeing that the rewriting of circular ternary words as in the rules preserves $M$\!-equivalence.

\begin{corollary}
 Let $\Sigma=\{a,b,c\}$ and suppose that $w=x\alpha\beta y\beta\alpha$ and $w'=x\beta\alpha y\alpha\beta$  for some distinct $\alpha,\beta\in\Sigma$ and $x,y\in \displaystyle{\Sigma^\ast}$. If $\Psi(x)=\Psi(y)$, then $[w]\equiv_M[w']$.
\end{corollary}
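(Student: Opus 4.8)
The plan is to derive the corollary directly from Theorems~\ref{TheoRuleCE1} and \ref{TheoBackBoneCE2} (equivalently, from Rule $CE_1$ and Rule $CE_2$) via a short case analysis on the unordered pair $\{\alpha,\beta\}$. The key enabling remark is that $\equiv_M$ is an equivalence relation, hence symmetric, so we may freely interchange the roles of $w$ and $w'$; this collapses the six ordered choices of $(\alpha,\beta)$ to the three unordered pairs $\{a,c\}$, $\{a,b\}$, $\{b,c\}$, and within each we may fix whichever letter we like to be $\alpha$.

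First I would dispose of the case $\{\alpha,\beta\}=\{a,c\}$: taking $\alpha=a$, $\beta=c$ we have $w=xacyca$ and $w'=xcayac$, which is exactly the hypothesis of Theorem~\ref{TheoRuleCE1}. Since $\Psi(x)=\Psi(y)$ forces $|x|_a=|y|_a$, $|x|_b=|y|_b$, $|x|_c=|y|_c$, the side-condition $|y|_b(|x|_a-|x|_c)=|x|_b(|y|_a-|y|_c)$ of that theorem holds trivially (both sides equal $|x|_b(|x|_a-|x|_c)$), and we conclude $[w]\equiv_M[w']$.

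Next I would handle $\{\alpha,\beta\}=\{a,b\}$ and $\{\alpha,\beta\}=\{b,c\}$ together by choosing the non-$b$ letter to be $\alpha$ (so $\alpha=a,\beta=b$ in the first case and $\alpha=c,\beta=b$ in the second). In both cases $w=x\alpha by b\alpha$ and $w'=xb\alpha y\alpha b$, which is precisely the setting of Theorem~\ref{TheoBackBoneCE2} with $\overline{\alpha}\in\Sigma\setminus\{\alpha,b\}$. Again $\Psi(x)=\Psi(y)$ yields $|x|=|y|$, $|x|_b=|y|_b$, and $|x|_{\overline{\alpha}}=|y|_{\overline{\alpha}}$, so the condition $|x|_{\overline{\alpha}}(|y|+|y|_b+3)=|y|_{\overline{\alpha}}(|x|+|x|_b+3)$ is satisfied (both sides equal $|x|_{\overline{\alpha}}(|x|+|x|_b+3)$), giving $[w]\equiv_M[w']$. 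Together with the symmetry reduction, this exhausts all distinct $\alpha,\beta$.

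I do not expect a genuine obstacle: the only real care is the bookkeeping that matches each ordered pair $(\alpha,\beta)$ to the correct rule and uses symmetry to cut the six cases down to three, after which equality of Parikh vectors makes the numeric hypotheses of both theorems vacuous. An alternative, more self-contained route would invoke Lemma~\ref{LemRuleCE2} for the entries indexed by $u\in\SP_\Sigma$ with $|u|\le 2$ and then reprove only $|[w]|_{abc}=|[w']|_{abc}$; but since that last equality is exactly what the $(1,4)$-entry computations inside Theorems~\ref{TheoRuleCE1} and \ref{TheoBackBoneCE2} establish, citing those theorems is the cleanest path.
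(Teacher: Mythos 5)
Your proposal is correct and is exactly the argument the paper intends: the corollary is stated as an immediate consequence of Theorems~\ref{TheoRuleCE1} and \ref{TheoBackBoneCE2} (i.e.\ Rules $CE_1$ and $CE_2$), whose side-conditions become identities once $\Psi(x)=\Psi(y)$, and the symmetry of $\equiv_M$ handles the remaining ordered choices of $(\alpha,\beta)$. The case bookkeeping you describe is the only content, and you have it right.
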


\section{Conclusion}

We have presented two different ways to count the number of subword occurrences in circular words. The former takes into account the cyclic structure of a circular word  while the latter is based on the fuzzy perspective that a circular word can be any word in the associated conjugacy class with a uniform probability.

In Section~\ref{SecParMat}, we have seen that certain properties of Parikh matrices of linear words also hold in the context of circular words, but only up to the ternary alphabet. This shows that the behavior of Parikh matrices of circular words differs significantly from those of linear words. As another example, in the classical setting, the value of each minor of any Parikh matrix is nonnegative \cite[Theorem~6]{MSY04}. However, we surmise that this result cannot be extended to the context of circular words as well.

In the classical context of Parikh matrices for linear words, characterization of $M$\!-equivalence, also known as the injectivity problem, has been open for two decades and it is well known that finite applications of Rule~$E_1$ and Rule~$E_2$ do not capture $M$\!-equivalence for the ternary alphabet. Analogously, this is true for Rule~$CE_1$ and Rule~$CE_2$ as well. For example, the two circular words $[aaaacbbc]$ and $[aaacbabc]$ are $M$-equivalent but neither Rule~$CE_1$ nor Rule~$CE_2$ can be applied to either of them. Thus a possible future work would be to develop more rewriting rules that can justify to a better extent--if not completely characterize--$M$\!-equivalence of circular words. For example, one can consider an analogue of the natural generalization of Rule $E_2$ studied in \cite{wT16}.


\end{document}